\newtheorem{proposition}{Proposition}[section]
\newtheorem{example}{Example}[section]
\newtheorem{remark}{Remark}
\numberwithin{equation}{section}
\newcommand{\dx}{\mathrm{d}x}
\newcommand{\ds}{\mathrm{d}s}
\begin{document}
\numberwithin{equation}{section}
\numberwithin{figure}{section}
\title{Linearized Reconstruction for Diffuse Optical Spectroscopic Imaging}

\author{Habib Ammari\thanks{Department of Mathematics,
ETH Z\"urich, R\"amistrasse 101, CH-8092 Z\"urich, Switzerland (\texttt{habib.ammari@math.ethz.ch}). }
\and Bangti Jin\thanks{Department of Computer Science, University College London,  London, WC1E 6BT,
UK (\texttt{b.jin@ucl.ac.uk}).}\and
Wenlong Zhang\thanks{Department of Mathematics, Southern University of Science and Technology, 1088
Shenzhen, Guangdong, P.R. China (\texttt{zhangwl@sustc.edu.cn}).}}

\date{}
\maketitle

\begin{abstract}
In this paper, we present a novel reconstruction method for diffuse optical spectroscopic imaging
with a commonly used tissue model of optical absorption and scattering. It is based on linearization and group sparsity,
which allows recovering the diffusion coefficient and absorption coefficient simultaneously, provided that their
spectral profiles are incoherent and a sufficient number of wavelengths are judiciously taken for the measurements. We also discuss the reconstruction
for imperfectly known boundary and show that with the multi-wavelength data, the method can reduce the influence of modelling errors and still
recover the absorption coefficient. Extensive numerical experiments are presented to support our analysis.
\end{abstract}

\def\keywords2{\vspace{.5em}{\textbf{  Mathematics Subject Classification
(MSC2000).}~\,\relax}}
\def\endkeywords2{\par}
\keywords2{94A08, 35R30, 92C55.}

\def\keywords{\vspace{.5em}{\textbf{ Keywords.}~\,\relax}}
\def\endkeywords{\par}
\keywords{diffuse optical spectroscopic imaging, reconstruction algorithm, group sparsity, imperfectly known boundary.}

\section{Introduction}

Diffuse optical spectroscopy (DOS) is a noninvasive and quantitative medical imaging modality,
for reconstructing absorption and scattering properties from optical measurements at multiple wavelengths
excited by the near-infrared (NIR) light. NIR light is particularly attractive for
oncological applications because of its deep tissue penetrance and high sensitivity to
haemoglobin concentration and oxygenation state \cite{CerussiTanamaiHsiang:2011}.
DOS imaging effectively exploits the wavelength dependences of tissue optical properties (e.g. absorption, scattering,
anisotropy, reduced scattering and refractive index), and such dependences have been measured and tabulated for
various tissues \cite{Bashkatov11,Jacques:2013}. It was reported that the spectral dependence of tissue
scattering contains much useful information for functional imaging \cite{Cerussi:2002,SevickChange:1991,
SrinivasanPogue:2005}. For example, dual-wavelength spectroscopy has been widely used
to determine the absorption coefficient and hence the concentrations of reduced
hemoglobin and oxygenated hemoglobin in tissue \cite{SevickChange:1991}. Thus, DOS
imaging holds significant potentials for many biomedical applications, e.g., breast oncology
functional brain imaging, stroke monitoring, neonatal hymodynamics
and imaging of breast tumours \cite{Boverman:2005,Cerussi:2002,CerussiTanamaiHsiang:2011,McBridePogue:1999}.

In many biomedical applications, it is realistic to assume that the absorption coefficient is
linked to the concentrations and the spectra of  chromophores through a linear map
(cf. \eqref{eqn:absorption} below), and the spectra of chromophores are known
from experiments. Then the goal in DOS is to recover the individual concentrations from the
measurements taken at multiple wavelengths. This task represents one of the most fundamental
problems arising in accurate functional and molecular imaging.
Theoretically, very little is known about uniqueness and stability of DOS (see
\cite{laure, bookhybrid, AmmariTriki:2017} for related work in electrical impedance tomography (EIT) and
\cite{alberti, jugnon1, jugnon2, BalRen:2012} for quantitative photoacoustic imaging).

Note that despite the linear dependence of absorption on the concentrations, the
dependence of imaging data on the absorption coefficient is highly nonlinear.
Further, it may suffer from severe ill-posedness and possible nonuniqueness; the latter is
inherent to diffuse optical tomography of reconstructing simultaneously the
absorption and diffusion coefficients \cite{Arridge99}. Thus, the imaging problem
is numerically very challenging. Nonetheless, there have been many important efforts
in developing  effective reconstruction algorithms using multi-wavelength data to obtain
images and estimates of spatially varying concentrations of chromophores inside an
optically scattering medium such as biological tissues. These include straightforward
least-squares minimization \cite{Cerussi:2002}, models-based minimization \cite{LauferCoxZhang:2010}
and Bayesian approach \cite{PulkkinenCox:2014}. Generally, there are two different ways
to use the spectral measurements. One is to recover the optical parameters
at each different wavelength separately and then fit the spectral parameter model to
these optical parameters \cite{BalRen:2012,Cox:09, RazanskyDistel:2009}; and the other
is to express the optical parameters as a function of the spectral parameter model and then
estimate the spectral parameter directly \cite{LauferCoxZhang:2010,PulkkinenCox:2014,YuanJiang:2009}.
We refer the interested readers to the survey \cite{Cox12} and the references therein
for detailed discussions.

In this paper, we shall develop a simple and efficient linearized reconstruction method for DOS to
recover the absorption and diffusion coefficients. We employ the diffusion approximation to the
radiative transfer equation for light transport, which has been used widely in biomedical optical
imaging \cite{Arridge99}. Our main contributions are as follows. First, we show that within the
linearized regime, incoherent spectral dependence allows recovering the concentrations and diffusion
coefficient simultaneously, provided that a sufficient number of measurements are judiciously taken.
However, generally there is no explicit criterion on the number of wavelength, except in a few
special cases (see Remark \ref{rmk:decouple}). Second, we demonstrate that with multi-wavelength measurements, the
chromophore concentrations can be still be reasonably recovered even if the domain boundary is only imperfectly
known. Thus, DOS can partially alleviate the deleterious effect of modeling errors, in a manner
similar to multifrequency EIT \cite{AmmariWen2016}. Third and last, these analytical
findings are verified by extensive numerical experiments, where the reconstruction is performed
via a group sparse type recovery technique developed in \cite{AmmariWen2016}.

The rest of the paper is organized as follows. In Section \ref{sec:linmodel}, we derive the linearized
model, and discuss the conditions for simultaneous recovery (and also the one group sparse reconstruction
technique). Then in Section \ref{sec:unknownboundary}, we demonstrate the potential of the multi-wavelength data
for handling modeling errors, especially imperfectly known boundaries. We show that the chromophore concentrations can
still be reasonably recovered from multi-wavelength data, but the diffusion coefficient is lost due to
the corruption of domain deformation. Extensive numerical experiments are carried out in Section
\ref{sec:numer} to support the theoretical analysis. Finally, some concluding remarks are provided in Section \ref{sec:concl}.

\section{The linearized diffuse optical spectroscopy model}\label{sec:linmodel}
In this section, we mathematically formulate the linearized multi-wavelength
method in DOS.
\subsection{Diffuse optical tomography}
First, we introduce the diffuse optical tomography (DOT) model.
Let $\Omega\subset \mathbb{R}^d$ $(d = 2, 3)$ be an open bounded domain with a smooth boundary $\partial\Omega$.
The photon diffusion equation for the photon fluence rate $u$ in the frequency domain takes the following form:
\begin{equation}
\label{eqn:DOT}
\left\{
\begin{aligned}
-\nabla\cdot (D(x,\lambda)\nabla u) + \mu_a(x,\lambda)u &= 0 \quad&\mbox{in } \Omega, \\
D(x,\lambda)\frac{\partial u}{\partial \nu} + \alpha u&= S \quad \quad&\mbox{on } \partial \Omega,
\end{aligned} \right.
\end{equation}
where $\nu$ is the unit outward normal vector to the boundary $\partial\Omega$, the nonnegative functions
$D$ and $\mu_a$ denote the photon diffusion coefficient and absorption coefficient,
respectively. In practice, the source function $S(x)$ is often taken to be a smooth approximation of the 
Dirac function $\delta_y(x)$ located at $y\in \partial \Omega$ \cite{Ripoll05}. The parameter $\alpha$ in 
the boundary condition is formulated as $\alpha =\frac{1-R}{2(1+R)}$ in DOT model, where $R$ is a 
directionally varying refraction parameter \cite{Arridge99}. Throughout, the parameter $\alpha$ is assumed
to be independent of the wavelength $\lambda$. The weak formulation of problem \eqref{eqn:DOT} 
is to find $u\in H^1(\Omega):=\{ v \in L^2(\Omega): \nabla v \in L^2(\Omega)\}$ such that
\begin{equation*}
\int_{\Omega}D(x,\lambda)\nabla u\cdot \nabla v +\mu_a(x,\lambda)uv \dx+ \int_{\partial\Omega}\alpha uv\ds=\int_{\partial \Omega} Sv\ds, \quad \forall v\in H^1(\Omega).
\end{equation*}

In practice, the coefficients $\mu_a$ and $D$ are actually depending on the light wavelength $\lambda$.
The optical properties of the tissue can be expressed using their spectral representations.
Commonly used spectral models for optical properties can be written as \cite{BalRen:2012, LauferCoxZhang:2010, RazanskyDistel:2009,
YuanJiang:2009}:
\begin{align}
  \mu_a(x,\lambda) & = \sum_{k=1}^K \mu_k(x)s_k(\lambda),\label{eqn:absorption}\\
  \mu_s(x,\lambda) & = \mu_{s,\rm ref}(\lambda/\lambda_{\rm ref})^{-b}. \label{eqn:scatter}
\end{align}
That is, the absorption coefficient $\mu_a$ is expressed as a weighted sum of chromophore concentrations $\mu_k$ 
and the corresponding absorption spectra $s_k$ of $K$ known chromophore. The scattering coefficient $\mu_s$ is given, according
to Mie scattering theory, as being proportional to $\mu_{s,\rm ref}$ and (scattering) power $-b$ of a relative
wavelength $\lambda/\lambda_{\rm ref}$ \cite{Bevilacqua:2000,Graaff:1992,SrinivasanPogue:2005}. The coefficient $\mu_{s,
\rm ref}$ is known as the reduced scattering coefficient at a reference wavelength $\lambda_{\rm ref}$, and
it can be spatially dependent. In many types of tissues (e.g., muscle and skin tissues), the wavelength
dependence of $\mu_s$ has been measured and can often be accurately approximated by $\mu_s(x,\lambda)=
a(x)\lambda^{-b}$, where the exponent $b$ is recovered from experiments \cite{Bashkatov11, Jacques:2013}.

The optical diffusion coefficient $D(x,\lambda)$ is given by $D(x,\lambda)=[3(\mu_a+\mu_s)]^{-1}$.
The condition $\mu_s \gg \mu_a$ is usually considered valid in order to ensure the accuracy of the
diffusion approximation to the radiative transfer equation \cite{Cox12,Cox:09,Arridge99}.  Hence,
we assume below that the diffusion coefficient $D(x,\lambda)$ has the form:
\begin{equation}\label{coe:d}
D(x,\lambda)=d(x)s_0(\lambda),
\end{equation}
where the wavelength dependence $s_0(\lambda)$ is known from experiments.

In DOT experiments, the tissue under consideration is illuminated with $M$ sources,
and measurements are taken at detectors. In this work, we assume for the sake of simplicity that the positions $x_n$ of
the sources and detectors are the same, and are distributed over the boundary $\partial\Omega$.
The spectroscopic inverse problem is to recover the spatially-dependent coefficient $d(x)$ and the
concentrations $\mu_k(x)$ of the chromophores given the measured data $u$ (corresponding to the
known sources $S$) on the detectors distributed on the boundary $\partial\Omega$
measured at several wavelengths $\lambda_i$.
It is well-known that the inverse problem of recovering both coefficients $d(x)$ and $\mu_k(x)$ is quite ill-posed
\cite{Cox:09}, since two different pairs of scattering and absorption coefficients
can lead to identical measured data. The multi-wavelength method is a promising approach to
resolve this challenging nonuniqueness issue. It is also reasonable to assume that the
wavelength dependence $s_0(\lambda)$ and the absorption spectra $s_k(\lambda)$ are linearly
independent so as to distinguish the diffusion coefficient and the chromophore concentrations by
effectively using the information contained in the multi-wavelength data.

\subsection{The linearized diffuse optical spectroscopy model}\label{sect:LinDOT}
Now we derive the linearized DOS model, which plays a crucial role in
the reconstruction technique. We discuss the cases of known and unknown
diffusion coefficient separately.

\subsubsection{Unknown diffusion coefficient}
First, we derive the linearized model with both diffusion coefficient $D(x,\lambda)$ and
concentrations $\mu_k(x)$ being unknown. For simplicity, we assume that the coefficient $d(x)$
is a small perturbation of the background, which is taken to be $1$, i.e.,
\begin{equation*}
    d(x) = 1+ \delta d(x),
\end{equation*}
where the unknown perturbation $\delta d(x)$ has a compact support in the domain $\Omega$ and is small (in
suitable $L^p(\Omega)$ norms).

For the inversion, smooth approximations $S_n$ of the Dirac  masses at $\{\delta_{x_n}\}_{n=1}^N$ are applied
and the corresponding fluence rates $u_n$ are measured on the detectors located at all $x_n$ over the
boundary $\partial\Omega$ to gain sufficient information about the diffusion coefficient $D(x,\lambda)$ and absorption
coefficient $\mu_a(x,\lambda)$. That is, let $\{u_n\equiv
u_n(x,\lambda)\}_{n=1}^N\subset H^1(\Omega)$ be the corresponding solutions to \eqref{eqn:DOT}, i.e.,
\begin{equation}\label{eqn:us}
\int_{\Omega}D(x,\lambda)\nabla u_n\cdot \nabla v +\mu_a( x,\lambda)u_nv \dx+ \int_{\partial\Omega}\alpha u_nv
\ds=\int_{\partial \Omega} S_nv \ds, \quad \forall v\in H^1(\Omega).
\end{equation}

Next we derive the linearized multi-wavelength model for the DOT problem based on an integral representation. Let
$v_m\equiv v_m(\lambda)\in H^1(\Omega)$ be the background solution corresponding to $D(x,\lambda)\equiv s_0(\lambda)$ and
$\mu_a \equiv 0$ with the excitation $S_m$, i.e., $v_m$ fulfils
\begin{equation}\label{eqn:vs}
\int_{\Omega}s_0(\lambda)\nabla v_m\cdot \nabla v\dx+ \int_{\partial\Omega}\alpha v_m v \ds=\int_{\partial \Omega} S_m v \ds, \quad \forall v\in H^1(\Omega).
\end{equation}
Note that unless $s_0(\lambda)$ is independent of the wavelength $\lambda$, the dependence of the background
solution $v_m$ on the wavelength $\lambda$ cannot be factorized out. Taking $v=v_m$ in  \eqref{eqn:us} and
$v=u_n$ in  \eqref{eqn:vs} and subtracting the two identities yield
\begin{equation*}
  s_0(\lambda)\int_\Omega \delta d(x) \nabla u_n \cdot \nabla v_m\dx + \sum_{k=1}^Ks_k(\lambda) \int_\Omega \mu_k(x) u_n  v_m\dx =  \int_{\partial \Omega}( S_nv_m-S_mu_n)\ds.
\end{equation*}
Since $\delta d$ and $\mu_k$s are assumed to be small, we can derive the approximations
$\nabla u_n(x,\lambda)\approx \nabla v_n(x,\lambda)$ and $u_n(x,\lambda)\approx v_n(x,\lambda)$
in the domain $\Omega$ (valid in the linear regime), and hence arrive at the following linearized model
\begin{equation} \label{eqn:integral}
    s_0(\lambda)\int_\Omega \delta d(x) \nabla v_n \cdot\nabla v_m\dx + \sum_{k=1}^Ks_k(\lambda) \int_\Omega \mu_k(x) v_n  v_m\dx =  \int_{\partial \Omega}( S_nv_m-S_mu_n)\ds.
\end{equation}
Note that since
$\int_{\partial \Omega} S_mu_n\ds$ is the measured data on the detector located at $x_m$ and $\int_{\partial
\Omega} S_nv_m\ds$ can be computed given the background spectra $s_0(\lambda)$, the right-hand side of the 
model \eqref{eqn:integral} is completely known and can be readily computed.

The DOT imaging problem for the linearized model is to recover $\delta d$ and the chromophore
concentrations $\{\mu_k\}_{k=1}^K$ from $\{u_n(x,\lambda)\}_{n=1}^N$ on the boundary $\partial\Omega$ at several
wavelengths $\{\lambda_q\}_{q=1}^Q$. For the reconstruction, we divide the domain $\Omega$ into a shape
regular quasi-uniform mesh of elements $\{\Omega_l\}_{l=1}^L$ such that $\overline\Omega=\cup_{l=1}^L\Omega_l$,
and consider a piecewise constant approximation of the coefficient $\delta d(x)$ and the concentrations
$\{\mu_k\}_{k=1}^K$ of the chromophores as follows
\begin{equation*}
\begin{aligned}
\delta d(x) &\approx \sum_{l=1}^L (\delta d)_l\chi_{\Omega_l}(x),\\
\mu_k(x) &\approx \sum_{l=1}^L (\mu_k)_l\chi_{\Omega_l}(x),\quad k=1,\ldots,K,
\end{aligned}
\end{equation*}
where  $\chi_{\Omega_l}$ is the characteristic function of the $l$th element $\Omega_l$, and $(\mu_k)_l$
denotes the value of the concentration $\mu_k$ of the $k$th chromophore in the $l$th element $\Omega_l$, so is $(\delta d)_l$.
Upon substituting the approximation into \eqref{eqn:integral}, we have a
finite-dimensional linear inverse problem
\begin{equation*}
s_0(\lambda)\sum_{l=1}^L(\delta d)_l\int_{\Omega_l}  \nabla v_n\cdot \nabla v_m\dx +  \sum_{k=1}^K s_k(\lambda)\sum_{l=1}^L(\mu_k)_l\int_{\Omega_l}  v_n  v_m\dx =
\int_{\partial \Omega}( S_nv_m-S_mu_n)\ds.
\end{equation*}

Finally, we introduce the sensitivity matrix $M^0(\lambda)$, $M^1(\lambda)$ and the data vector $X$.
We use a single index $j=1,\ldots,J$ with  $J=N^2$ for the index pair $(m,n)$
with $j=N(m-1)+ n$, and introduce the sensitivity matrix $M^0(\lambda)=[M^0_{jl}]
\in\mathbb{R}^{J\times L}$ and $M^1(\lambda)=[M^1_{jl}]
\in\mathbb{R}^{J\times L}$ with its entries $M^1_{jl}$ given by
\begin{equation*}
\begin{aligned}
  M^0_{jl}(\lambda)=\int_{\Omega_l} \nabla v_n\cdot \nabla v_m \dx \quad (j\leftrightarrow (m,n)),\\
    M^1_{jl}(\lambda)=\int_{\Omega_l} v_n v_m \dx \quad (j\leftrightarrow (m,n)),
\end{aligned}
\end{equation*}
which is independent of the wavelength $\lambda$.
Likewise, we introduce a data vector $X(\lambda)\in\mathbb{R}^J$ with its $j$th entry $X_j(\lambda)$ given by
\begin{equation*}
  X_j(\lambda) = \int_{\partial \Omega}( S_nv_m-S_mu_n)\ds \quad (j\leftrightarrow (m,n)).
\end{equation*}
By writing the vectors $A_0 = (\delta d)_l\in\mathbb{R}^L$ and $A_k = (\mu_k)_l\in\mathbb{R}^L$, $k=0,\ldots,K$, we obtain the following
linear system (parameterized by the light wavelength $\lambda$)
\begin{equation}\label{eqn:lin-inverse}
 M^0(\lambda)s_0(\lambda)A_0 + M^1(\lambda) \sum_{k=0}^Ks_k(\lambda) A_k  = X(\lambda).
\end{equation}

\subsubsection{Known diffusion coefficient}
If the diffusion coefficient $D(x,\lambda)$ is known, then the goal  is to recover the concentrations $\{\mu_k\}_{k=1}^K$ of the
chromophores. As before, we assume the unknowns $\mu_k$ are small (in suitable $L^p(\Omega)$ norms).
We can  repeat the above procedure, except that the background solution $v_m\in H^1(\Omega)$ is now defined by
\begin{equation}\label{eqn:vscasknow}
  \int_{\Omega}D(x,\lambda)\nabla v_m \cdot\nabla v\dx+ \int_{\partial\Omega}\alpha v_m v \ds=\int_{\partial \Omega} S_m v \ds, \quad \forall v\in H^1(\Omega).
\end{equation}
Taking $v=v_m$ in  \eqref{eqn:us} and $v=u_n$ in  \eqref{eqn:vscasknow} and
subtracting the two identities gives
\begin{equation*}
 \sum_{k=1}^Ks_k(\lambda) \int_\Omega \mu_k(x) u_n  v_m\dx =  \int_{\partial \Omega}( S_nv_m-S_mu_n)\ds.
\end{equation*}
Using the approximation $u_n(x,\lambda)\approx v_n(x,\lambda)$ in the domain $\Omega$
(which is valid in the linear regime), we arrive at the following linearized model
\begin{equation*}
     \sum_{k=1}^Ks_k(\lambda) \int_\Omega \mu_k(x) v_n  v_m\dx=  \int_{\partial \Omega}( S_nv_m-S_mu_n)\ds.
\end{equation*}

As before, we divide the domain $\Omega$ into a shape regular quasi-uniform mesh of elements
$\{\Omega_l\}_{l=1}^L$ such that $\overline\Omega=\cup_{l=1}^L\Omega_l$, and consider piecewise
constant approximations of the chromophore concentrations $\mu_k$:
\begin{equation*}
\begin{aligned}
\mu_k(x) \approx \sum_{l=1}^L (\mu_k)_l\chi_{\Omega_l}(x),\quad k=1,\ldots,K.
\end{aligned}
\end{equation*}
Then we obtain the following finite-dimensional linear inverse problem
\begin{equation*}
 \sum_{k=1}^K s_k(\lambda)\sum_{l=1}^L(\mu_k)_l\int_{\Omega_l}  v_n  v_m\dx = \int_{\partial \Omega}( S_nv_m-S_mu_n)\ds.
\end{equation*}

Using the sensitivity matrix $M$ and the data vector $X$ in \eqref{eqn:lin-inverse}, we get the following parameterized linear system
\begin{equation}\label{eqn:lin-inversescaknow}
M^1(\lambda) \sum_{k=1}^Ks_k(\lambda) A_k  = X(\lambda).
\end{equation}

\subsection{The linearized DOT with multi-wavelength data}
In the two linearized DOT inverse problems in Section \ref{sect:LinDOT}, the vectors $A_0$ (if $d(x)$ is unknown) and
$\{A_k\}^K_{k=1}$ are the quantities
of interest and are to be estimated from the wavelength dependent data $X(\lambda)$, given the
spectra $s_0(\lambda)$ and $s_k(\lambda)$. These quantities directly contain the information of the locations and supports of
$\delta d(x)$ and all the chromophores $\mu_k$. Now we describe a procedure
for recovering the coefficient $d(x)$ (if unknown) and the concentrations $\mu_k(x)$ of the chromophores simultaneously.
The diffusion wavelength dependence $s_0(\lambda)$ is always known (e.g., $s_0(\lambda)=c\lambda^{b}$,
where the parameter $b$ is known from experiments \cite{Bashkatov11}).

We first formulate the inversion method for the case an unknown diffusion coefficient $D(x)$. We consider
the case when all the absorption coefficient spectra $\{s_k(\lambda)\}_{k=1}^K$ are known.
Suppose that we have collated the measured data at $Q$ distinct wavelengths $\{\lambda_q\}_{q=1}^Q$. We write
$S=(s_k(\lambda_q))\in\mathbb{R}^{K\times Q}$, $S_0=(s_0(\lambda))\in\mathbb{R}^{1\times Q}$. We also
introduce the measured matrix $X=[X^t(\lambda_1)\ \ldots \ X^t(\lambda_Q)]^t\in\mathbb{R}^{J\times Q,1}$ and
the vector of unknowns $A=[A^t_1\ \ldots\ A^t_K, A^t_0]^t\in \mathbb{R}^{L\times (K+1),1}$. Here, the superscript
$t$ denotes the matrix/vector transpose. Then we define a total sensitivity matrix $M$ constructed by $Q\times (K+1)$ blocks. The
$(i,j)$th block of $M$ is $M^1(\lambda_i)s_j(\lambda_i)$ for $1\leq i \leq Q$, $1\leq j\leq K$ and the
$(i,K+1)$th block of $M$ is $M^1(\lambda_i)s_0(\lambda_i)$ for $1\leq i \leq Q$. Hence,  \eqref{eqn:lin-inverse} yields
the following linear system:
\begin{equation}\label{eqn:lin-inv-knownprofile}
  MA=X.
\end{equation}

Similarly, when the diffusion coefficient $D(x,\lambda)$ is known, we define another total sensitivity
matrix $M$ constructed by $Q\times K$ blocks. The $(i,j)$th block of $M$ is $M^1(\lambda_i)s_j(\lambda_i)$
for $1\leq i \leq Q$, $1\leq j\leq K$ and  the vector of unknowns $A=[A^t_1\ \ldots\ A^t_K]^t\in \mathbb{R}^{L\times K,1}$.
We get from \eqref{eqn:lin-inversescaknow} the following linear system:
\begin{equation}\label{eqn:lin-inv-knownprofilescaknow}
  MA=X.
\end{equation}

\begin{remark}\label{rmk:decouple}
Under the condition that the wavelength dependence of $M^0(\lambda)$ and $M^1(\lambda)$ can be factorized out
(e.g., $s_0$ is independent of $\lambda$) (and thus can be absorbed into the spectra $s_k(\lambda)$), the linear systems
can be decoupled to gain  further insight. To see this, we consider the case \eqref{eqn:lin-inv-knownprofile}.
Since all the spectra are assumed to be linearly independent, when a sufficient number
of wavelengths $\{\lambda_q\}_{q=1}^Q$ are judiciously taken in the experiment, the corresponding
spectral matrix $\tilde S^t =\left[S_0^t\ \ S^t \right]$ is incoherent in the sense
that $Q\geq K+1$ and $\mathrm{rank}(S)=K+1$ and $\tilde S$ is also well-conditioned. Then the matrix $\tilde S$
has a right inverse $\tilde S^{-1}$. By letting $\tilde Y=X\tilde S^{-1}$, we obtain
\begin{equation*}
  [M^0A_0,M^1A] = \tilde Y.
\end{equation*}
These are $K+1$ decoupled linear systems. By letting
$\tilde Y=[\tilde Y_0\ \ldots \ \tilde Y_K]\in\mathbb{R}^{J\times(K+1)}$, we have $K+1$ independent
(finite-dimensional) linear inverse problems
\begin{equation}\label{eqn:Y}
\begin{aligned}
  M^0A_0&=\tilde Y_0,\\
  M^1A_k&=\tilde Y_k,\quad k = 1,\ldots,K,
  \end{aligned}
\end{equation}
where $A_0$ represents the diffusion coefficient $\delta d(x)$ and $A_k$ (for $1\leq k \leq K$) represents the $k$th
chromophore $\mu_k(x)$. Note that each linear system determines one and only one unknown concentration $A_k$.
Similarly, for the case of a known diffusion coefficient, the matrix $S$ has a right inverse $S^{-1}$, under the given
incoherence assumption. By letting $Y=XS^{-1}$, we obtain
\begin{equation*}
  M^1A = Y.
\end{equation*}
These are $K$ decoupled linear systems. By letting
$Y=[Y_1\ \ldots \ Y_K]\in\mathbb{R}^{J\times(K+1)}$, we have $K$ independent
(finite-dimensional) linear inverse problems
\begin{equation}\label{eqn:Yscaknow}
  M^1A_k=Y_k,\quad k = 1,\ldots,K,
\end{equation}
where $A_k$ (for $1\leq k \leq K$) represents the $k$th chromophore concentrations $\mu_k(x)$.
\end{remark}

Below, we describe a group sparse reconstruction method developed in \cite{AmmariWen2016}
to solve the ill-conditioned linear systems \eqref{eqn:lin-inv-knownprofile} and \eqref{eqn:lin-inv-knownprofilescaknow}.

\subsection{Group sparse reconstruction algorithm}\label{sect:sparsity}
Upon linearization and decoupling steps (see Remark \ref{rmk:decouple}), one arrives at
decoupled linear systems of the form:
\begin{equation}\label{eqn:lin}
  Dx=b,
\end{equation}
where $D = M_0$ or $M\in\mathbb{R}^{J\times L}$ is the sensitivity matrix, $x=A_k\in\mathbb{R}^{L}$ ($0\leq k \leq K$) is the unknown vector,
and $b=Y_k\in\mathbb{R}^J$ ($0\leq k \leq K$) is a known measured data. These linear systems are often under-determined, and severely
ill-conditioned, due to the inherent ill-posed nature of the DOT inverse problem. We adapt a numerical sparse method developed in
\cite{AmmariWen2016} to solve \eqref{eqn:lin}.

The algorithm takes the following two aspects into consideration:
\begin{enumerate}
  \item[(1)] Under the assumption that the unknowns $\delta d$ and $\mu_k$ are small, we may assume that $x$ is sparse.  This suggests to solve the minimization problem
  \begin{equation*}
    \min_{x\in{\Lambda}} \|x\|_1 \quad \mbox{subject to } \|Dx-b\|\leq \epsilon,
  \end{equation*}
 where $\|\cdot\|_1$ denotes the $\ell^1$ norm of a vector. Here, ${\Lambda}$ represents an admissible constraint on the unknowns $x$, since
  they are bounded from below and above, and $\epsilon>0$ is an estimate of the noise level of $b$.

  \item[(2)] In DOT applications, it is also reasonable to assume that each concentration of chromophore $\mu_k$ is clustered, and this
  refers to the concept of group sparsity. The grouping effect is useful to remove the undesirable spikes typically observed
  for the $\ell^1$ penalty alone.
\end{enumerate}

Now we describe the algorithm, i.e., group iterative soft thresholding, listed in  Algorithm \ref{alg:gist}, adapted
from iterative soft thresholding for $\ell^1$ optimization \cite{DaubechiesDefrise:2004}. Here, $N$ is the maximum
number of iterations,  $w_{lk}$ are nonnegative weights controlling the strength of interaction, and $\mathcal{N}_l$
denotes the neighborhood of the $l$th element. We take $w_{lk}=\beta$, for some $\beta>0$ (default: $\beta=0.5$),
and $\mathcal{N}_l$ consists of all elements in the triangulation sharing one edge with the $l$th element. Since
the solution $x$ is expected to be sparse, a natural choice of the initial guess $x^0$ is the zero vector. The
regularization parameter $\gamma$ plays a crucial role in the performance of the reconstruction quality: the larger
the value $\gamma$ is, the sparser the reconstructed solution becomes. There are several possible strategies to
determine its value, e.g., discrepancy principle and balancing principle, or a trial-and-error manner \cite{ItoJin:2015}.

\begin{algorithm}
  \caption{Group iterative soft thresholding.}
  \label{alg:gist}
  \begin{algorithmic}[1]
    \STATE Input $D$, $b$, $W$, $\mathcal{N}$, $\gamma$, $N$ and $x^0$;
    \FOR {$j=1,\ldots,N$}
     \STATE Compute the proxy $g^j$ by
         \begin{equation*}
           g^j=x^j-s^jD^t(Dx^j-b);
         \end{equation*}
     \STATE Compute the generalized proxy $d^j$ by
      \begin{equation*}
           d^j_l = |g^j_l|^2 + \sum_{k\in \mathcal{N}_l}w_{lk}|g^j_k|^2;
      \end{equation*}
     \STATE Compute the normalized proxy $\bar d^j$ by
     \begin{equation*}
        \bar d^j= \max(d^j)^{-1}d^j;
     \end{equation*}
     \STATE Adapt the regularization parameter $\bar\alpha^j$ by
     \begin{equation*}
         \bar{\alpha}_l^j=\gamma/\bar d^j_l,\quad l=1,\ldots,L;
     \end{equation*}
     \STATE Update $x^{j+1}$ by the group thresholding
     \begin{equation*}
       x^{j+1} = P_{\Lambda}(S_{s^j\bar\alpha^j} (g^j));
      \end{equation*}
     \STATE Check the stopping criterion.
    \ENDFOR
  \end{algorithmic}
\end{algorithm}

Below we briefly comment on the main steps of the algorithm and refer to \cite{AmmariWen2016} for details.
\begin{enumerate}
  \item[Step 3] $g^j$ is a gradient
descent update of $x^j$, and $s^j>0$ is the step length, e.g., $s^j=1/\|D\|^2$.
\item[Step 4] This step takes into account the neighboring influence.
\item[Step 5] $\bar d^j$ indicates a grouping effect: the larger  $\bar d^j_l$ is, the more likely
the $l$th element belongs to the group. 
\item[Step 6] This step rescales $\gamma$ to be  inversely proportional to
$\bar d^j_l$.
\item[Step 7] This step performs the projected thresholding with a spatially variable $\bar\alpha^j$.
$P_\Lambda$ denotes the pointwise projection onto the constraint set $\Lambda$ and $S_\lambda$ for $\lambda>0$ is defined by
$S_\lambda(t) = \max(|t|-\lambda,0)\,\mathrm{sign}(t)$.
\end{enumerate}

In our numerical experiments, we apply Algorithm \ref{alg:gist} to the coupled linear systems
\eqref{eqn:lin-inv-knownprofile} and \eqref{eqn:lin-inv-knownprofilescaknow} directly. This
can be achieved by a simple change to Algorithm \ref{alg:gist}. Specifically, at Step 3 of
the algorithm, instead we compute the gradient of the least-squares functional $\tfrac12\|MA-Y\|^2$ by
\begin{equation*}
  g^j=A^j-s^jM^t(Mx^j-Y).
\end{equation*}
The remaining steps of the algorithm are applied to each component $A_i$ independently. Note that
one can easily incorporate separately a regularization parameter $\gamma$ on each component,
which is useful since the diffusion and scattering coefficients are likely to have different
magnitudes.

\section{Imperfectly known boundary}
\label{sec:unknownboundary}

Now, in order
to show the potentials of DOS for handling modelling errors, we consider the case where the boundary  of the domain of interest is not perfectly known. This is
one type of the modelling errors that occurs whenever the positions of the point sources and detectors or the domain
of interest are not perfectly modelled.

We denote the true but unknown physical domain by $\widetilde \Omega$, and the
computational domain by $\Omega$, which approximates $\widetilde\Omega$.
Next, we introduce a forward map $F:\widetilde \Omega\to \Omega$, $\widetilde x\to x$, which
is assumed to be a smooth orientation-preserving map with a sufficiently smooth inverse
map $F^{-1}: \Omega\to\widetilde \Omega$. We denote the Jacobian of the map $F$ by $J_F$,
and the Jacobian of $F$ with respect to the surface integral by $J^S_{F}$.

Suppose now that the function $\widetilde u_n(\widetilde x,\lambda)$ satisfies problem \eqref{eqn:DOT}
in the true domain $\widetilde \Omega$ with the diffusion coefficient $\widetilde D(\widetilde x,\lambda)$,
absorption coefficient $\widetilde\mu_a(\widetilde x,\lambda)$ and source  $\widetilde S(\widetilde x)$, namely
\begin{equation}\label{eqn:DOT-deformed}
  \left\{\begin{aligned}
    -\nabla_{\widetilde x}\cdot(\widetilde D(\widetilde x,\lambda)\nabla_{\widetilde x}\widetilde u_n(\widetilde x,\lambda)) +  \widetilde\mu_a(\widetilde x,\lambda)\widetilde u_n& = 0 \quad \mbox{ in }\widetilde\Omega,\\
\widetilde D(\widetilde x,\lambda)\frac{\partial \widetilde u_n}{\partial \widetilde\nu}(\widetilde x)  + \alpha\widetilde u_n(\widetilde x)  &= \widetilde S_n(\widetilde x) \quad\qquad \mbox{on } \partial \widetilde\Omega.
  \end{aligned}\right.
\end{equation}
Here, $\widetilde S_n$ is a smooth approximation of the Dirac mass at the true position $\widetilde x_n$.
The wavelength-dependent absorption coefficient $\widetilde\mu(\widetilde x,\lambda)$ also has a separable
form related to the true concentrations $\widetilde \mu_k(\widetilde x)$ of the chromophres:
\begin{equation}\label{eqn:absorp-tilde}
\widetilde\mu_a(\widetilde x,\lambda)=\sum_{k=1}^K \widetilde\mu_k(\widetilde x)s_k(\lambda),
\end{equation}
where $\widetilde \mu_k$ are assumed to be small. Furthermore, the diffusion coefficient $D$ takes the linear form:
\begin{equation}\label{eqn:diff-tilde}
\widetilde D(\widetilde x,\lambda) =s_0(\lambda)(1+\widetilde{\delta d}(\widetilde x)).
\end{equation}
The weak formulation of problem \eqref{eqn:DOT-deformed} is given by: find $\widetilde u_n(\cdot,\lambda) \in H^1(\widetilde\Omega)$ such that
\begin{equation}\label{eqn:un-perturbed}
\int_{\widetilde\Omega}\widetilde D( \widetilde x,\lambda)\nabla_{\widetilde x} \widetilde u_n\cdot \nabla_{\widetilde x} \widetilde v +\widetilde\mu_a( \widetilde x,\lambda)\widetilde u_n \widetilde v \mathrm{d}\widetilde{x}+ \int_{\partial\widetilde\Omega}\alpha \widetilde u_n \widetilde v \mathrm{d}\widetilde s=\int_{\widetilde\Omega} \widetilde S_n \widetilde v \mathrm{d}\widetilde{s}, \quad  \forall\widetilde v\in H^1(\widetilde\Omega).
\end{equation}

In the experimental settings, $\widetilde u_n$ is assumed to be measured on the boundary $\partial\widetilde\Omega$. However, because of the incorrect
knowledge of $\partial\widetilde\Omega$, the measured quantity is in fact  $u_n:=\widetilde u_n \circ F^{-1}$ restricted
to the computational boundary $\partial\Omega$.
Below we consider only the case that the domain $\Omega$ is a small variation of the
true physical one $\widetilde\Omega$, so that the linearized
regime is valid. Specifically,  the map $F:\widetilde\Omega\to\Omega$ is given by $F(\widetilde x)=\widetilde x + \epsilon
\widetilde\phi(\widetilde x)$, where $\epsilon$ is a small
scalar and the smooth function $\widetilde\phi(\widetilde x)$ characterizes the domain deformation. Further, let $F^{-1}(x)
=x+\epsilon \phi(x)$ be the inverse map, which is also smooth.

In order to analyze the influence of the domain deformation on the linearized DOT problem, we introduce
the solution $v_m\in H^1(\Omega)$ corresponding to $\bar D(\lambda,x)\equiv s_0(\lambda)$ and
$\mu_a \equiv 0$ with $S_m$ being a smooth approximation of $\delta_{x_m}$, i.e.,  $v_m$ fulfils
\begin{equation}\label{eqn:vs-perturbed}
\int_{\Omega}s_0(\lambda)\nabla v_m \cdot\nabla v\dx+ \int_{\partial\Omega}\alpha  v_m v \ds= \int_{\partial \Omega} S_m v \ds, \qquad  v\in H^1(\Omega).
\end{equation}

Now we can state the corresponding linearized DOT problem with an unknown boundary.
The result indicates that even for an isotropic diffusion coefficient $\widetilde D$ in the true
domain $\widetilde\Omega$, in the computational domain $\Omega$ the equivalent
diffusion coefficient $D$ is generally anisotropic, and there is an additional perturbation
factor on the boundary $\partial\Omega$.

\begin{proposition}\label{prop:linear-perturb}
Let $\mu_a = \widetilde \mu_a \circ F^{-1}$. The linearized inverse problem on the domain $\Omega$ is given by
\begin{equation}\label{eqn:int-domaindeform}
 \begin{split}
    s_0 (\lambda)\left(\int_\Omega (\delta d(x)+\epsilon\Psi)  \nabla v_n\cdot\nabla v_m\dx \right) + \int_{\partial\Omega}\alpha \epsilon \psi v_nv_m\ds  + \sum_{k=1}^Ks_k(\lambda) \int_\Omega \mu_k(x) v_n  v_m\dx\\
  =  \int_{\partial \Omega} (S_n v_m - S_m u_n)  \ds,
       \end{split}
\end{equation}
for some smooth functions $\Psi:\Omega\to\mathbb{R}^{d\times d}$ and $\psi:\partial\Omega\to\mathbb{R}$, which are independent of the wavelength $\lambda$.
\end{proposition}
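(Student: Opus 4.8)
The plan is to transport the weak formulation \eqref{eqn:un-perturbed} from the true domain $\widetilde\Omega$ onto the computational domain $\Omega$ via the change of variables $x=F(\widetilde x)$, and then linearize exactly as in the derivation of \eqref{eqn:integral}. Using the standard transformation rules $\mathrm{d}\widetilde x = |\det J_F|^{-1}\dx$, $\nabla_{\widetilde x}(w\circ F) = J_F^{t}\,(\nabla_x w)\circ F$, and $\mathrm{d}\widetilde s = (J^S_F)^{-1}\ds$ on the boundary, and testing \eqref{eqn:un-perturbed} with $\widetilde v = v\circ F$, one finds that $u_n=\widetilde u_n\circ F^{-1}$ satisfies, for all $v\in H^1(\Omega)$,
\[
\int_\Omega \nabla v\cdot D^\ast\nabla u_n\dx + \int_\Omega \mu_a^\ast\, u_n v\dx + \int_{\partial\Omega}\alpha\,\beta^\ast\, u_n v\ds = \int_{\partial\Omega} S_n^\ast\, v\ds,
\]
with the pushforward diffusion tensor $D^\ast = \big(\widetilde D\, J_F J_F^{t}/|\det J_F|\big)\circ F^{-1}$, the pushforward absorption $\mu_a^\ast = \big(\widetilde\mu_a/|\det J_F|\big)\circ F^{-1}$, the boundary weight $\beta^\ast = (1/J^S_F)\circ F^{-1}$, and source $S_n^\ast = \big(\widetilde S_n/J^S_F\big)\circ F^{-1}$. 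This is the classical invariance of the conductivity equation under diffeomorphisms: an isotropic $\widetilde D$ becomes the generally anisotropic tensor $D^\ast$ on $\Omega$, together with an extra scalar weight $\beta^\ast$ on $\partial\Omega$.

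Next I would subtract, as for \eqref{eqn:integral}: take $v=v_m$ in the identity above and $v=u_n$ in the background equation \eqref{eqn:vs-perturbed}, and subtract, to obtain
\[
\int_\Omega \nabla v_m\cdot(D^\ast - s_0(\lambda)I)\nabla u_n\dx + \int_\Omega \mu_a^\ast u_n v_m\dx + \int_{\partial\Omega}\alpha(\beta^\ast-1)u_n v_m\ds = \int_{\partial\Omega}(S_n^\ast v_m - S_m u_n)\ds.
\]
By \eqref{eqn:diff-tilde} the scalar $s_0(\lambda)$ factors out of $D^\ast$, so $D^\ast - s_0(\lambda)I = s_0(\lambda)\big[(1+\widetilde{\delta d})\,J_FJ_F^{t}/|\det J_F|\circ F^{-1} - I\big]$. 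Writing $F^{-1}(x)=x+\epsilon\phi(x)$ and expanding to first order in $\epsilon$ gives $J_FJ_F^{t}/|\det J_F|\circ F^{-1} = I + \epsilon\Psi + O(\epsilon^2)$ for a smooth symmetric matrix field $\Psi:\Omega\to\mathbb{R}^{d\times d}$ determined solely by $\phi$, hence independent of $\lambda$; similarly $\beta^\ast = 1 + \epsilon\psi + O(\epsilon^2)$ for a smooth scalar $\psi:\partial\Omega\to\mathbb{R}$, obtained by expanding the surface Jacobian $J^S_F=|\det J_F|\,|J_F^{-t}\widetilde\nu|$, again $\lambda$-independent. Setting $\delta d:=\widetilde{\delta d}$ and $\mu_k:=\widetilde\mu_k$ on $\Omega$ (these differ from their pullbacks under $F^{-1}$ only by products of two small quantities), using \eqref{eqn:absorp-tilde} with $\mu_a^\ast = \sum_{k}s_k(\lambda)\mu_k + (\text{higher order})$ since $\widetilde\mu_a$ is small and $|\det J_F|^{-1}=1+O(\epsilon)$, replacing $S_n^\ast$ by the computational source $S_n$ (the two agreeing up to higher order, both being mollifications of the Dirac mass at $x_n=F(\widetilde x_n)$ so that the right-hand side is the measured/computable quantity as in \eqref{eqn:integral}), and finally invoking the linearization $u_n\approx v_n$, $\nabla u_n\approx\nabla v_n$ in $\Omega$ — valid because $\widetilde{\delta d}$, $\widetilde\mu_k$ and $\epsilon$ are all small — one discards the resulting products of small quantities and lands exactly on \eqref{eqn:int-domaindeform}.

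The one place that requires genuine care is the bookkeeping of orders: the coefficient perturbations $\widetilde{\delta d},\widetilde\mu_k$ and the deformation amplitude $\epsilon$ are all treated as first-order small, so every cross product ($\epsilon\,\widetilde{\delta d}$, $\epsilon\,\widetilde\mu_k$, $\epsilon\,(u_n-v_n)$, and so on) must be consistently dropped; this is precisely why the absorption term keeps the clean form $\sum_k s_k(\lambda)\int_\Omega\mu_k v_n v_m$ with no $\epsilon$-correction, and why $\Psi$ and $\psi$ depend only on the deformation field. The remaining ingredients — the diffeomorphism-invariance identity for the bilinear form and the first-order Taylor expansions of $J_FJ_F^{t}/|\det J_F|$ and of $J^S_F$ — are routine, and since the proposition only asserts existence, smoothness and $\lambda$-independence of $\Psi$ and $\psi$, their explicit expressions need not be written out: they fall out of the expansion for free.
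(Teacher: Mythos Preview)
Your proof is correct and follows essentially the same approach as the paper: transport the weak formulation to $\Omega$ via the pushforward diffusion tensor $(J_F\widetilde D J_F^{t}/|\det J_F|)\circ F^{-1}$, subtract the background equation \eqref{eqn:vs-perturbed}, and linearize simultaneously in $\epsilon$ and in the coefficient perturbations, dropping all cross products. The paper additionally writes out $\Psi = J_{\widetilde\phi}+J_{\widetilde\phi}^{t}-\mathrm{div}\,\widetilde\phi\, I$ explicitly (using $|\det J_F|=1-\epsilon\,\mathrm{div}\,\widetilde\phi+O(\epsilon^2)$), but as you correctly observe this is not required by the statement; your handling of the source term $S_n^\ast\approx S_n$ is in fact slightly more careful than the paper's, which silently makes this identification.
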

\begin{proof}
First, we derive the governing equation for the variable $u_n=\widetilde u_n\circ F^{-1}$
in the domain $\Omega$ from \eqref{eqn:un-perturbed}. Let $v=\widetilde v\circ F^{-1} \in H^1(\Omega)$.
By the chain rule, we have $\nabla_{\widetilde x}\widetilde u_n\circ F^{-1}=(J_F^t\circ F^{-1})\nabla_x
u_n$, where the superscript $t$ denotes the matrix transpose. Thus, we deduce that
\begin{equation*}
  \begin{split}
   & \quad\int_{\widetilde\Omega}\widetilde  D(\widetilde x, \lambda)\nabla_{\widetilde x} \widetilde u_n(\widetilde x)\!\cdot\!\nabla_{\widetilde x} \widetilde v(\widetilde x){\rm d}\widetilde x \\
   & =\! \int_\Omega
  (\widetilde D\circ F^{-1})(x) (J_F^t\circ F^{-1}) (x)\nabla u_n(x)\cdot (J_F^t\circ F^{-1})(x) \nabla v(x) |\det J_{F}(x)|^{-1} \dx\\
  & =\!  \int_\Omega
  (J_F\circ F^{-1})(x) (\widetilde D\circ F^{-1})(x) (J_F^t\circ F^{-1}) (x)\nabla u_n(x)\cdot  \nabla v(x) |\det J_{F}(x)|^{-1} \dx \\
  & =  \int_\Omega
  D(x,\lambda) \nabla u_n(x)\cdot  \nabla v(x) \,\dx,
  \end{split}
\end{equation*}
where the transformed diffusion coefficient $D(x,\lambda)$ is given by \cite{Sylvester:1990,KolehmainenLassas:2005,AmmariWen2016}
\begin{equation*}
  D(x,\lambda) = \left(\frac{J_F(\cdot)\widetilde D(\cdot,\lambda)J_F^t(\cdot)}{|\det J_F(\cdot)|}\circ F^{-1}\right)(x).
\end{equation*}
Similarly, we obtain
\begin{align*}
   \int_{\widetilde\Omega}\widetilde  \mu_a(\widetilde x, \lambda) \widetilde u_n(\widetilde x)\widetilde v(\widetilde x){\rm d}\widetilde x & =  \int_\Omega
  \mu_a(x,\lambda)  u_n(x) v(x) |{\det J_{F}(x)}|^{-1} \,\dx,\\
   \int_{\partial\Omega}\alpha \widetilde u_n \widetilde v \mathrm{d}\widetilde s & =   \int_{\partial\Omega}\alpha  u_n  v |{\det J^S_{F}(x)}|^{-1} \ds.
\end{align*}
Here we use the fact that $\widetilde D\equiv 1$ near the boundary in the second equation, since $\delta d$
is compactly supported in the domain. From \eqref{eqn:un-perturbed}, it follows that $u_n$ satisfies
\begin{align}
\int_{\Omega}D(x,\lambda)\nabla u_n\cdot \nabla v &+\mu_a(x,\lambda)u_nv |\det J_{F}(x)|^{-1}\dx\nonumber\\
  &+ \int_{\partial\Omega}\alpha  u_n  v |\det J^S_{F}(x)|^{-1}\ds=  \int_{\partial \Omega} S_n v \ds, \quad \forall v\in H^1(\Omega).\label{eqn:us-deformed}
\end{align}
Then by choosing $v=v_m$ in \eqref{eqn:us-deformed} and $v=u_n$ in \eqref{eqn:vs-perturbed}, we arrive at
\begin{equation}\label{eqn:integralbou}
  \begin{split}
    \int_\Omega (D(x,\lambda)- s_0(\lambda)) \nabla v_n\cdot \nabla v_m \dx &+ \int_\Omega \mu_a(x) v_n  v_m|\det J_{F}(x)|^{-1} \dx \\
   & + \int_{\partial\Omega}\alpha (|\det J^S_{F}(x)|^{-1}-1) u_nv\ds=  \int_{\partial \Omega} (S_n v_m - S_m u_n)  \ds.
      \end{split}
\end{equation}
Note that $J_F= I + \epsilon J_{\widetilde\phi}$, and
$J_{F^{-1}}=I+\epsilon J_\phi = I-\epsilon J_{\widetilde\phi}\circ F^{-1}+O(\epsilon^2)$, since $\epsilon$ is small.
It is known that $|\det J_F|= 1-\epsilon\mathrm{div}\widetilde\phi +O(\epsilon^2)$
\cite[equation (2.10)]{Hettlich:1995}, we similarly derive $|\det J^S_F|= 1+\epsilon\psi +O(\epsilon^2)$.
Then $D(x,\lambda)$ is given by
\begin{equation*}
  \begin{split}
    D(x,\lambda) & = \widetilde D(\cdot,\lambda)(1+\epsilon\mathrm{div}\widetilde\phi(\cdot))^{-1}(I+\epsilon(J_{\widetilde\phi}(\cdot)+J_{\widetilde\phi}^t(\cdot)))\circ F^{-1}(x)+O(\epsilon^2)\\
    &= \widetilde D(\cdot,\lambda)((1-\epsilon\mathrm{div}\widetilde\phi(\cdot))I + \epsilon(J_{\widetilde\phi}(\cdot)+J^t_{\widetilde\phi}(\cdot)))\circ F^{-1}(x) + O(\epsilon^2) \\
    &= \widetilde D(\cdot,\lambda)(1  + \Psi \epsilon)\circ F^{-1}(x) + O(\epsilon^2),
  \end{split}
\end{equation*}
where $\Psi=(J_{\widetilde\phi}+J_{\widetilde\phi}^t- \mathrm{div}\widetilde\phi I)$ is smooth
and independent of $\lambda$.
This and the linear form of $\widetilde\mu_a(\widetilde x,\lambda)$ in \eqref{eqn:diff-tilde} yield
\begin{align*}
  D(x,\lambda)  \approx s_0(\lambda)I +\epsilon s_0(\lambda)\Psi(x)\quad\mbox{and}\quad
\mu_a(x)|{\det J_{F}(x)}|^{-1}=\sum_{k=1}^K \mu_k(x)s_k(\lambda) + o(\epsilon),
\end{align*}
where we have used the assumption that the $\mu_k$ are small.
Upon substituting the above expressions into \eqref{eqn:integralbou} and the  approximations
$\nabla u_n\approx\nabla v_n$, $u_n\approx v_n$ in the domain, we obtain \eqref{eqn:int-domaindeform}.
\end{proof}

By Proposition~\ref{prop:linear-perturb}, in the presence of an imperfectly known boundary with its magnitude
$\epsilon$ being comparable with the concentrations $\{\mu_k\}_{k=1}^K$ and the perturbation $\delta d$, the perturbed
sensitivity system contains significantly modeling errors resulting from the domain deformation. Consequently, a direct inversion of the
linearized model \eqref{eqn:int-domaindeform} is unsuitable. This issue can be resolved using the multi-wavelength
approach as follows. Since \eqref{eqn:int-domaindeform} is completely analogous to \eqref{eqn:integral}, with the only
difference lying in the additional terms in $s_0(\lambda)$ (corresponding to the diffusion coefficient) and the
edge perturbation $\int_{\partial\Omega}\alpha\epsilon \psi v_nv_m\ds $. However, the edge perturbation on the
boundary $\partial\Omega$ can be treated as unknowns corresponding to an additional spectral profile $s_*(\lambda)\equiv 1$.
Thus, one may apply the multi-wavelength approach to recover the quantities of interest.

Specifically, assume that the spectral profiles $s_0, s_1,...,s_K,$ and $s_*$ are incoherent. Then the method in Section \ref{sect:LinDOT}
may be applied straightforwardly, since the right-hand side is known. However, the diffusion perturbation $\delta d$ will 
never be properly reconstructed, due to the pollution of the error term $\epsilon\Psi$ (resulting from the domain perturbation).
The concentrations of chromophores $\mu_k$ corresponding to the wavelength spectrum $s_k$, $k=1,\ldots,K$ may
be reconstructed, since they are affected by the deformation only through the transformation $\mu_k = \widetilde \mu_k
\circ F^{-1}$. That is, the location and shape can be slightly deformed, provided that the deformation magnitude $\epsilon$ 
is small. Only the information of the diffusion coefficient is affected, and cannot be reconstructed. In summary, 
multi-wavelength DOT is very effective to eliminate the modelling errors caused by the boundary uncertainty, at least in the linearized regime.

We have discussed that the influence of an uncertain boundary in the case where both diffusion
and absorption coefficients are unknown. We can also analyze for the case with a known diffusion
coefficient similarly. Specifically, one may assume the
deformed diffusion coefficient on the domain $\bar D(x,\lambda)= \widetilde D(\widetilde x,\lambda )
\circ F^{-1}$ and repeat the procedure of Proposition \ref{prop:linear-perturb}. We just give the
conclusion: when the diffusion coefficient is known, the domain deformation
contributes to a perturbation inside the spectrum $s_0$, and the boundary deformation
pollutes the known diffusion term, and the concentration of chromophores $\mu_k$ corresponding
to the wavelength spectrum $s_k$, $k=1,\ldots,K$ could be reconstructed.

\section{Numerical experiments}\label{sec:numer}
Now we show some numerical results to illustrate our analytical findings. The general setting for the numerical
experiments is as follows. The domain $\Omega$ is taken to be the unit circle $\Omega=\{(x_1,x_2):x_1^2+
x_2^2<1\}$. There are 16 point sources uniformly distributed along the boundary $\partial\Omega$; see Figure
\ref{fig:domain} for a schematic illustration of the domain $\Omega$, the point sources and the detectors.

Furthermore, we assume that the spectral profile $s_0(\lambda)$ for the diffusion coefficient is $s_0(\lambda)=
0.2\lambda^b$, where the parameter $b$ is known from experiments \cite{Bashkatov11}. In all the examples below,
we take $b=1.5$. We will also see that $\mu_s \gg \mu_a$ is fulfilled in all the numerical examples. We take
a directionally varying refraction parameter $R=0.2$, so that $\alpha =\frac{1-R}{2(1+R)}=1/3$. We use a piecewise
linear finite element method on a shape regular quasi-uniform triangulation of the domain $\Omega$. The unknowns
are represented on a coarser finite element mesh using a piecewise constant finite element basis. We measure the
data $u_n(x_m,\lambda) (:= \int_{\partial \Omega} S_m u_n \ds)$ on the detectors located at $x_m$. The noisy data
$u_n^\delta(x_m,\lambda)$ is generated by adding Gaussian noise to the exact data $u^\dag_n(x_m,\lambda)$
corresponding to the true diffusion coefficient $D(x,\lambda)$ and absorption coefficient $\mu_a(x,\lambda)$ by
\begin{equation*}
 u_n^\delta(x_m,\lambda) = u^\dag_n(x_m,\lambda) + \eta\max_{l} |u^\dag_n(x_m,\lambda)-v_n(x_m,\lambda)| \xi_{n,m},
\end{equation*}
where $\eta$ is the noise level, and $\xi_{n,m}$ follows the standard normal distribution.

\begin{figure}
\begin{center}
\includegraphics[width=0.5\columnwidth]{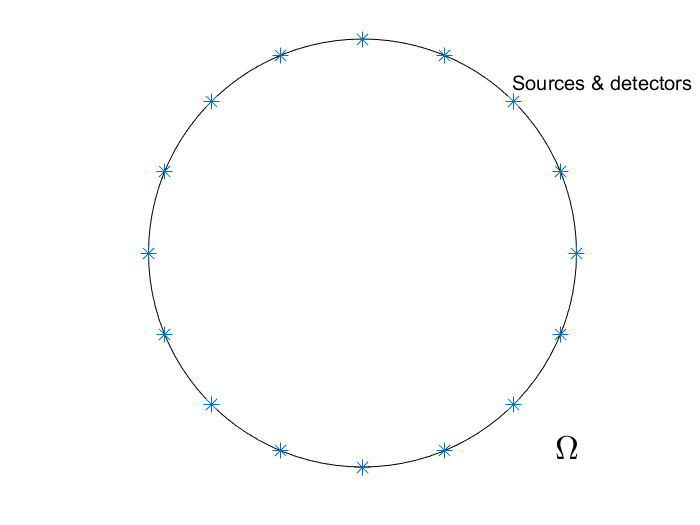}
\end{center}
\caption{The true boundary shape, the positions of sources and detectors.}
\label{fig:domain}
\end{figure}

We present the numerical results for the cases with known boundary and with imperfectly known boundary
separately, and for each case, we  also present the examples with the diffusion coefficient being known
and unknown. In Algorithm~\ref{alg:gist}, we take a constant step size to solve \eqref{eqn:lin}.

\subsection{Perfectly known boundary}

First, we show numerical results for the case with a perfectly known boundary shape. We shall test the robustness
of the algorithm against the noise, and show that the multi-wavelength approach could reduce the deleterious
effects of the noise in the measured data. The regularization parameter $\gamma$ was determined by a trial-and-error
manner, and it was fixed at $\gamma=5\times 10^{-3}$ for the diffusion coefficient and $\gamma=1\times10^{-4}$ for
the absorption coefficient in all the numerical examples with perfectly known boundary. This algorithm is always
initialized with a zero vector.

\begin{example}\label{exam1}
Consider a known diffusion coefficient $D(\lambda,x)=s_0(\lambda)=0.2\lambda^{1.5}$,
and two chromophores inside the domain: the wavelength dependence of the chromophore on the top is $s_1(\lambda)=0.2\lambda$,
and the one on the bottom is $s_2(\lambda)=0.2(\lambda-1)^2$; See Figure \ref{fig:exam1} for an illustration.
We take measurements at $Q=3$ wavelengths with $\lambda_1=1$, $\lambda_2=1.5$ and $\lambda_3=2$.
\end{example}

The numerical results for Example \ref{exam1} are presented in Figure \ref{fig:exam1}. It is observed that the
recovery is very localized within a clean background even with $10\%$ noise in the data, and the supports of the
recovered concentrations of the chromophores agree closely with the true ones and the magnitudes are well-retrieved.
Remarkably, the increase of the noise level from $1\%$ to $10\%$ does not influence much the shape of the recovered
concentrations. Therefore, if the given spectral profiles $s_k(\lambda)$ are sufficiently incoherent, the corresponding
unknowns can be fairly recovered. This example also show that the proposed multi-wavelength approach is very robust
to data noise, due to strong prior imposed by Algorithm \ref{alg:gist}.

\begin{figure}[tbp]
\begin{center}
\begin{tabular}{ccc}
\includegraphics[width=4cm]{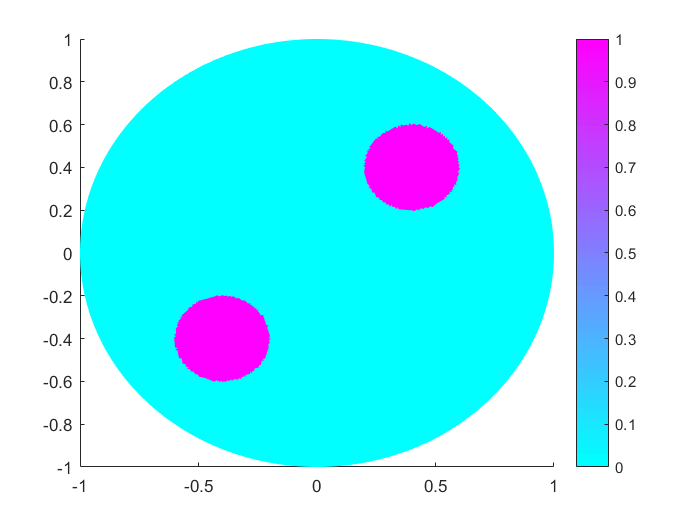} &
\includegraphics[width=4cm]{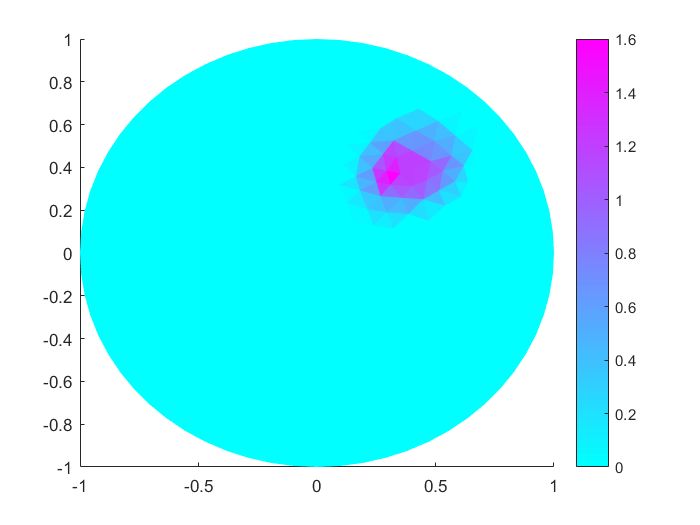} &
\includegraphics[width=4cm]{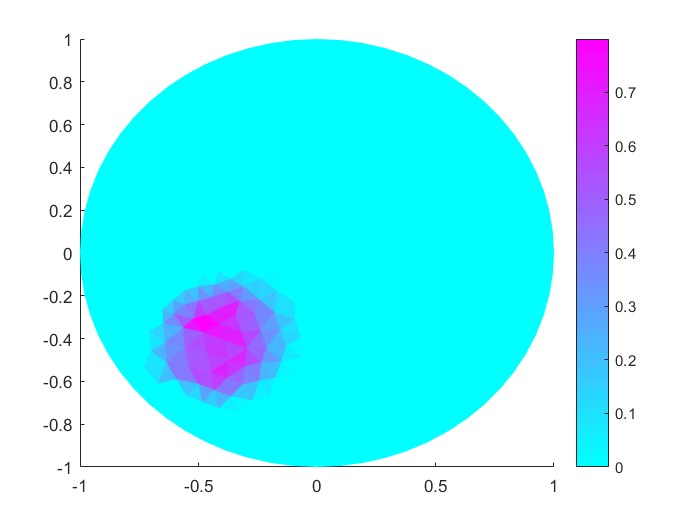}\\
(a) true $\mu_k$  & (b)  recovered $\mu_1$
&(c) recovered $\mu_2$\\
& \includegraphics[width=4cm]{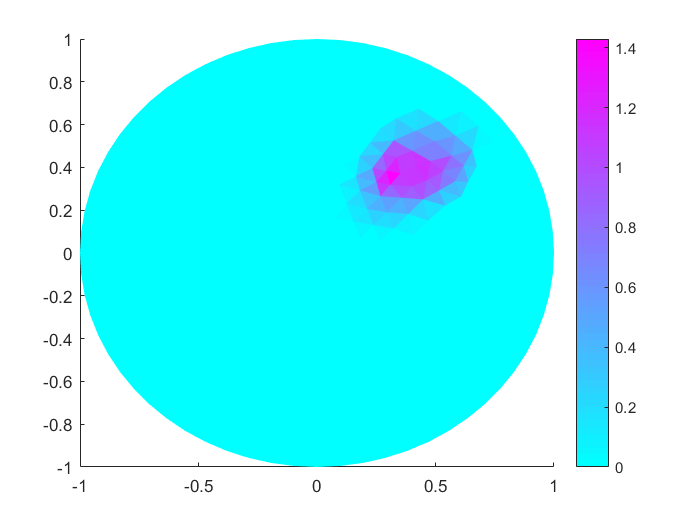} &
\includegraphics[width=4cm]{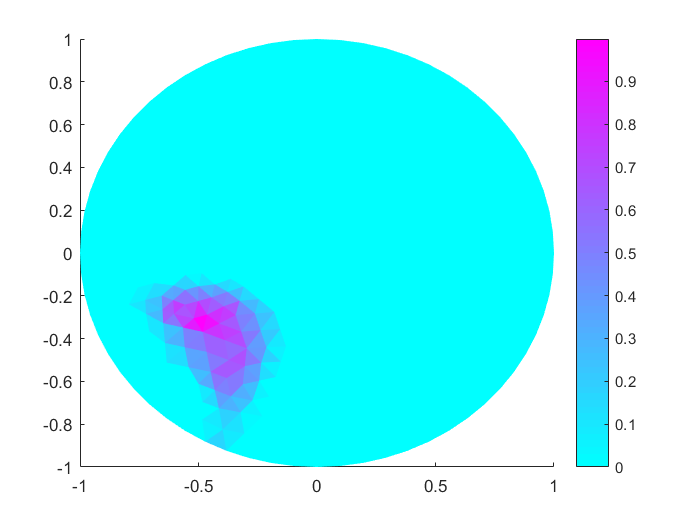} \\
  & (d) recovered $\mu_1$
&(e) recovered $\mu_2$
\end{tabular}
\end{center}
\caption{ Numerical results for Example \ref{exam1}: (a) exact $\mu_1$ and $\mu_2$ of two chromophores;  (b)--(c) recovered results with $\eta=1\%$ noise level; (d)--(e) recovered results with $\eta=10\%$ noise level. }
\label{fig:exam1}
\end{figure}

The next example shows the approach for reconstructing three chromophores inside the domain.

\begin{example}\label{exam5}
Consider the case with a known diffusion coefficient $D(\lambda,x)=s_0(\lambda)=0.2\lambda^{1.5}$,
and 3 chromophores inside the domain:
\begin{itemize}
    \item[(i)] The two chromophores on the top share the wavelength dependence $s_1(\lambda)=0.2\lambda$,
and the one on the bottom has a second spectral profile $s_2(\lambda)=0.2(\lambda-1)^2$;
    \item[(ii)] The wavelength dependence of the chromophore on the top right is $s_1(\lambda)=0.2(\lambda-1)^2$, of the top left one is $s_2(\lambda)=0.2\lambda$
and of the bottom is $s_3(\lambda)=0.2(\lambda-1)^3$.
\end{itemize}
We take measurements at $Q=3$ wavelengths with $\lambda_1=1$, $\lambda_2=1.5$ and $\lambda_3=2$
and the noise level is set to be $\eta=1\%$.
\end{example}

The reconstruction results for Example \ref{exam5} are shown in Figure \ref{fig:exam5}. Figure \ref{fig:exam5}
indicates that the unknowns corresponding to two or three spectral profiles
can be fairly recovered in terms of both the supports and magnitudes. In case (i), the two
chromophores on the top share the wavelength dependence, and they are recovered simultaneously; whereas in
case (ii), the chromophores have three incoherent wavelength dependences, and they
can be recovered separately.

The next example aims at recovering both diffusion and absorption coefficients, which is known to
be very challenging in the absence of multi-wavelength data.

\begin{figure}[hbt!]
\begin{center}
\begin{tabular}{ccc}
\includegraphics[width=4cm]{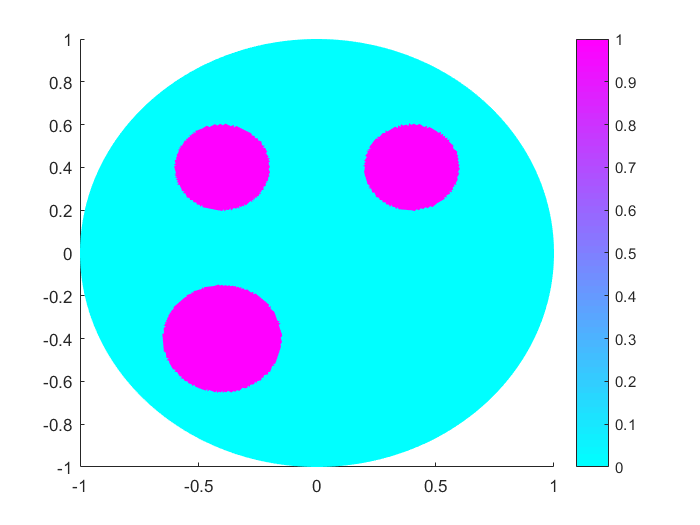} &
\includegraphics[width=4cm]{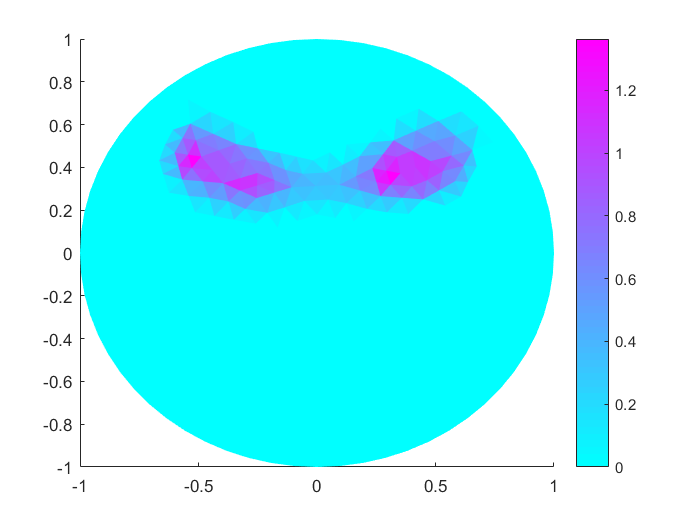} &
\includegraphics[width=4cm]{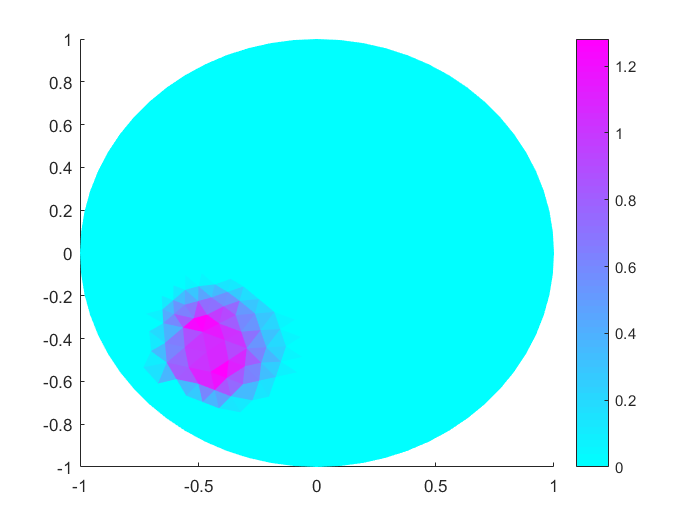}\\
(a) true $\mu_k$  & (b)  recovered $\mu_1$ &(c) recovered $\mu_2$\\
\includegraphics[width=4cm]{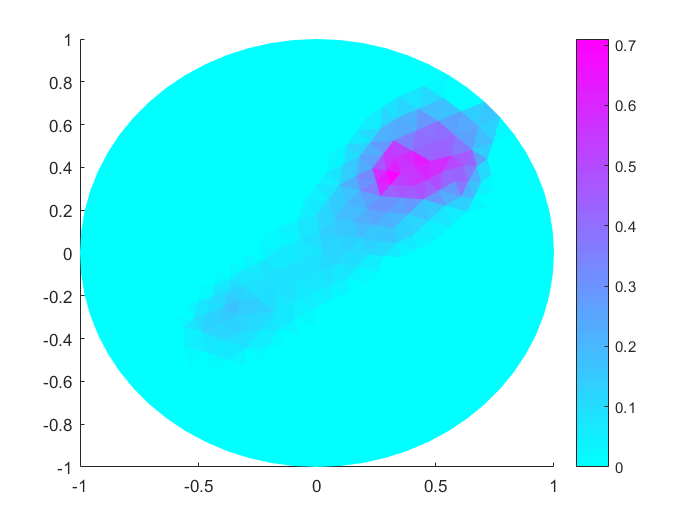} & \includegraphics[width=4cm]{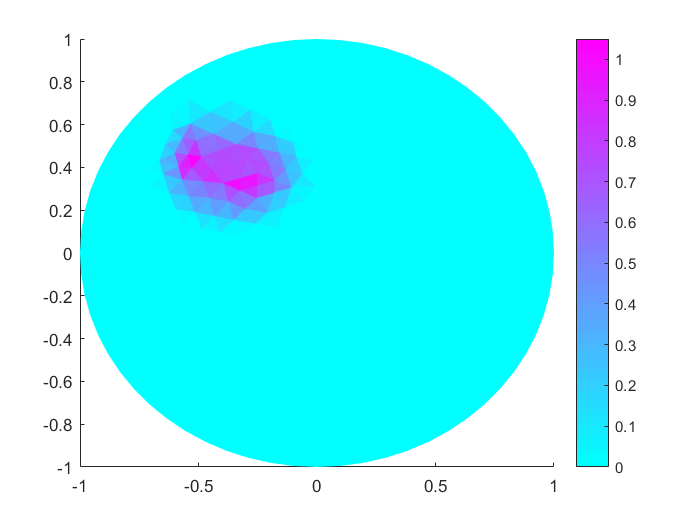} &
\includegraphics[width=4cm]{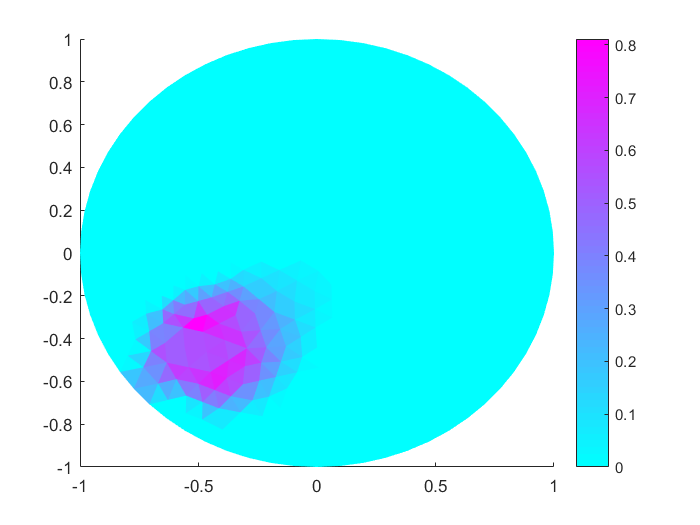} \\
 (d) recovered $\mu_1$ & (e) recovered $\mu_2$
&(f) recovered $\mu_3$
\end{tabular}
\end{center}
\caption{ Numerical results for Example \ref{exam5}: (a) exact $\mu_1$ and $\mu_2$ of two chromophores; (b)-(c) recovered results for case (i) (noise level $\eta=1\%$); (d)-(f) recovered results for case (ii) (noise level $\eta=1\%$). }
\label{fig:exam5}
\end{figure}

\begin{example}\label{exam2}
Consider the case of an unknown diffusion coefficient given by $D(\lambda,x)=s_0(\lambda)(1+0.1\delta d(x))
=0.2\lambda^{1.5} (1+0.25\delta d(x))$. Similar to Example \ref{exam1}, consider two chromophores inside
the domain: the wavelength dependence of the chromophore on the top is $s_1(\lambda)=0.2\lambda$, and that
of the bottom is $s_2(\lambda)=0.2(\lambda-1)^2$. The measurements are taken at $Q=3$ wavelengths with
$\lambda_1=1$, $\lambda_2=1.5$ and $\lambda_3=2$, and the noise level $\eta$ is fixed at $\eta=1\%$.
\end{example}

\begin{figure}[htb!]
\begin{center}
\begin{tabular}{ccc}
\includegraphics[width=4cm]{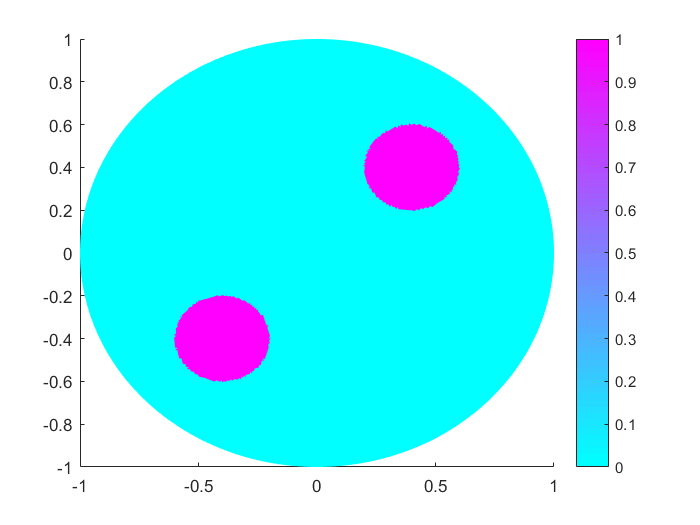} &
\includegraphics[width=4cm]{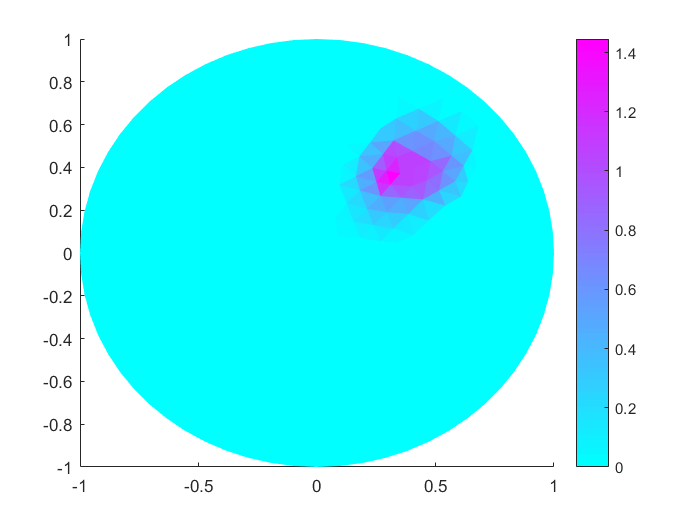} &
\includegraphics[width=4cm]{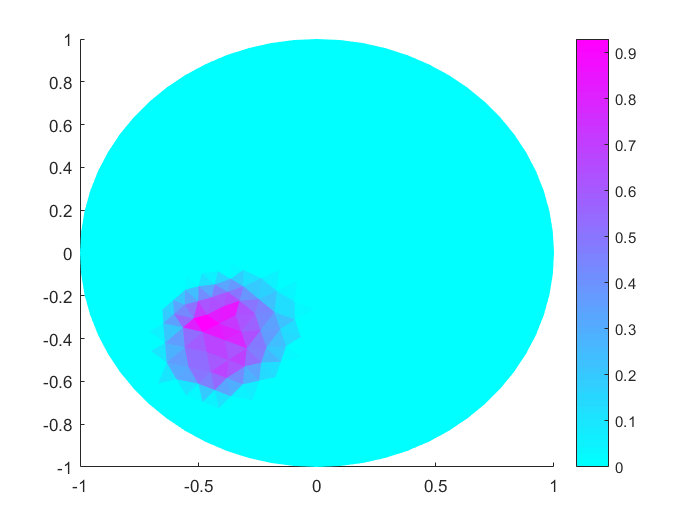}\\
(a) true $\mu_k$  & (b)  recovered $\mu_1$
&(c) recovered $\mu_2$ \\
\includegraphics[width=4cm]{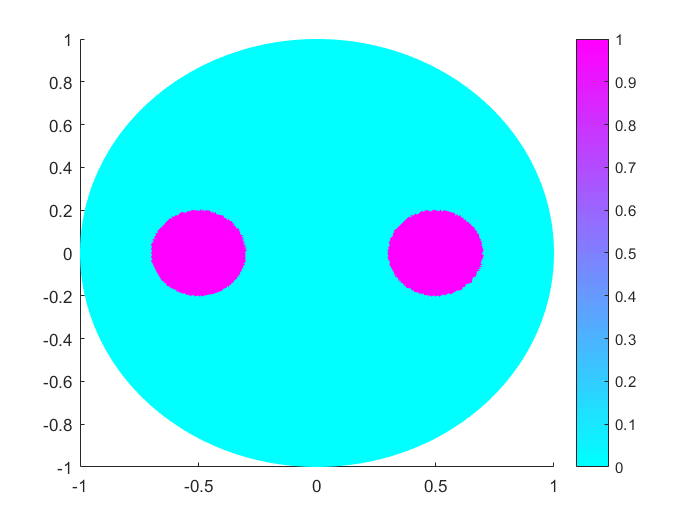} &
\includegraphics[width=4cm]{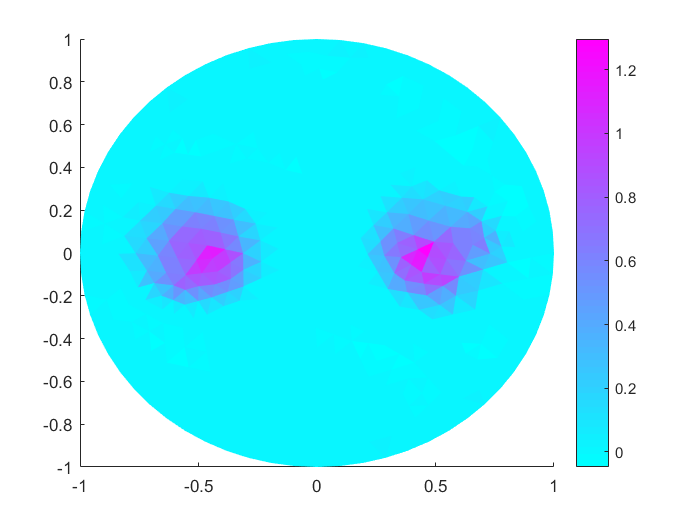} \\
(d) true $\delta d(x)$ & (e) recovered $\delta d(x)$
\end{tabular}
\end{center}
\caption{  Numerical results for Example \ref{exam2}: (a) exact $\mu_1$ and $\mu_2$ of two chromophores;
(b)-(c) recovered results for two chromophores; (d)-(e) true and recovered  $\delta d(x)$. }
\label{fig:exam2}
\end{figure}

The numerical results for Example \ref{exam2} are shown in Figure \ref{fig:exam2}.
Simultaneously reconstructing the diffusion and absorption coefficients is more sensitive to data
noise, when compared with the case of a known diffusion coefficient. Recall that the problem of recovering
both coefficients is quite ill-posed \cite{Cox:09}: two different pairs of scattering and absorption
coefficients can give rise to identical measured data. However, it is observed from Example \ref{exam2}
that the multi-wavelength approach allows overcoming this nonuniqueness issue, provided that the
spectra are indeed incoherent.

The next example shows that multi-wavelength data can mitigate the effects of the noise.
\begin{example}\label{exam6}
Consider the setting of Example \ref{exam2}, but with a noise level $\eta=30\%$. We study two different numbers of wavelengths.
\begin{itemize}
    \item[(i)] The measurements are taken at $Q=3$ wavelengths with $\lambda_i=1+(i-1)/2$, $i=1,\ldots,3$;
    \item[(ii)] The measurements are taken at $Q=30$ wavelengths with $\lambda_i=1+(i-1)/29$, $i=1,\ldots,30$.
\end{itemize}
\end{example}

Numerical results for Example \ref{exam6} are shown in Figure \ref{fig:exam6}.
When using only data for 3 wavelengths, the recovered images are blurred by $ 30\%$ noise.
However, using data for 30 wavelengths, both the diffusion coefficient $\delta d$ and two
chromophore concentrations $\mu_k$ are much better resolved than using data with 3 wavelengths. Hence,
more wavelength observations can greatly mitigate the effects of data noise;
which concurs with the observations from the experimental study \cite{SrinivasanPogue:2005}.

\begin{figure}[hbt!]
\begin{center}
\begin{tabular}{ccc}
\includegraphics[width=4cm]{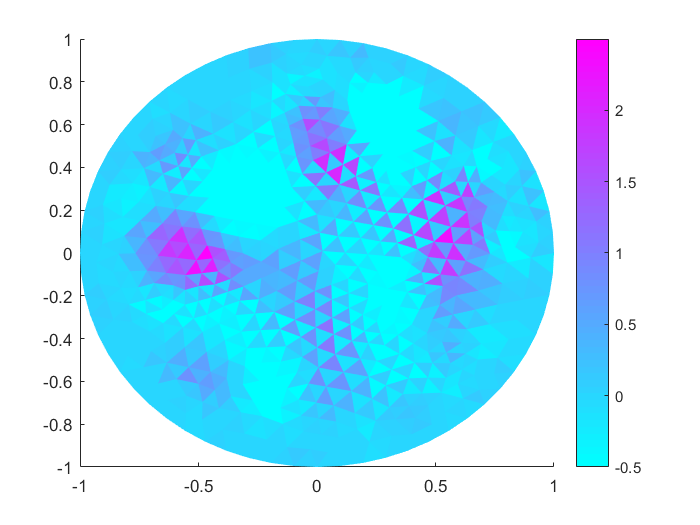} &
\includegraphics[width=4cm]{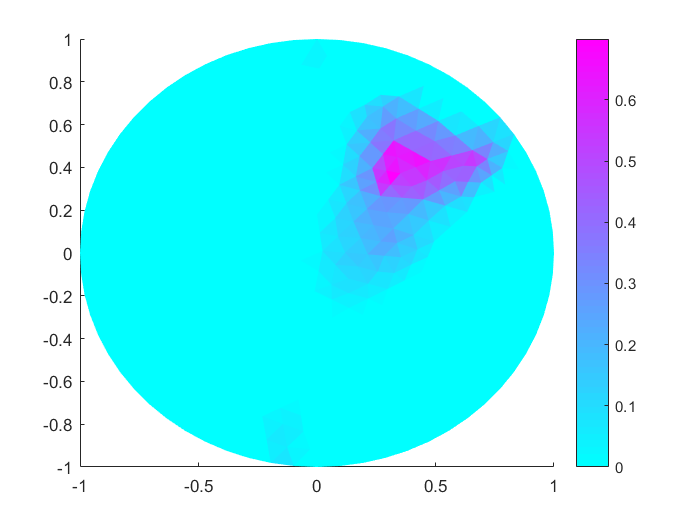} &
\includegraphics[width=4cm]{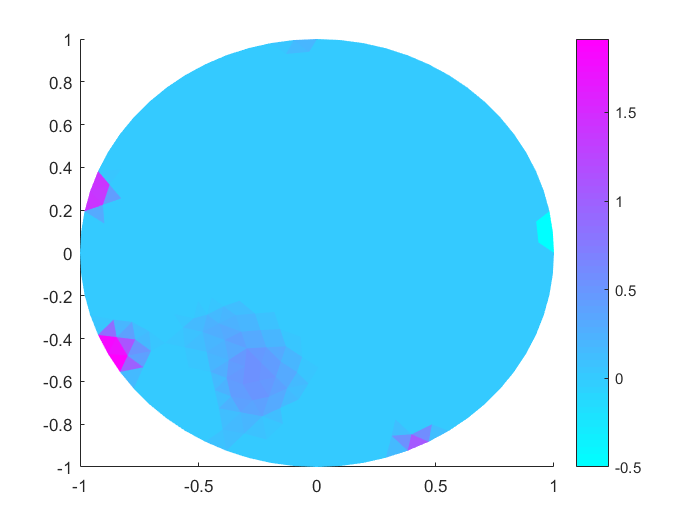}\\
(a) recovered $\delta d(x)$  & (b)  recovered $\mu_1$ & (c) recovered $\mu_2$\\
\includegraphics[width=4cm]{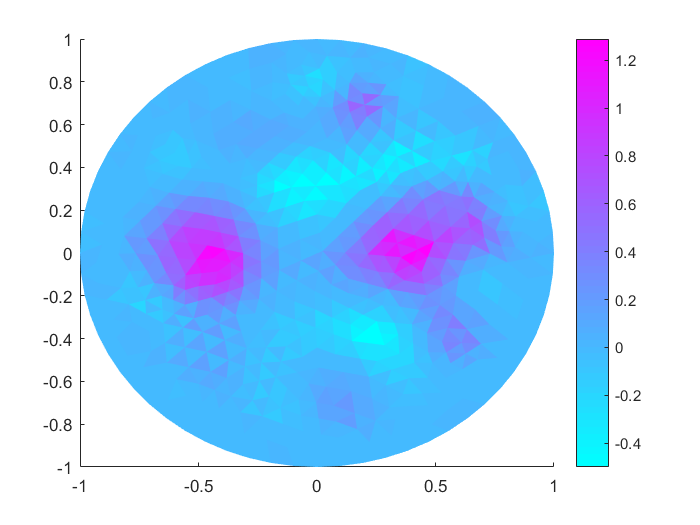} & \includegraphics[width=4cm]{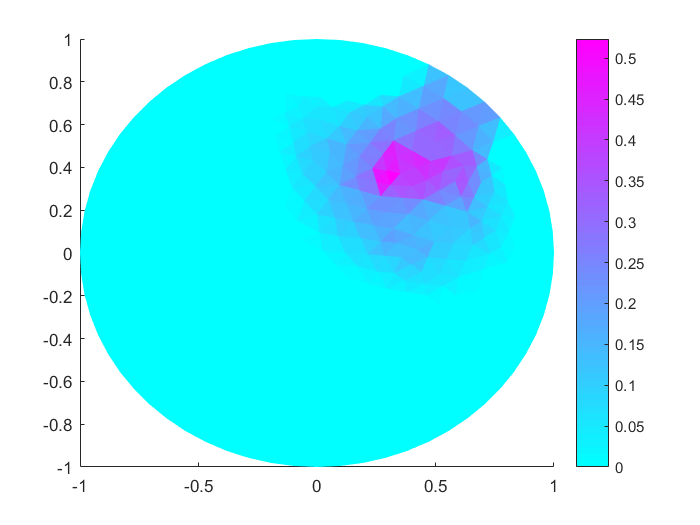} &
\includegraphics[width=4cm]{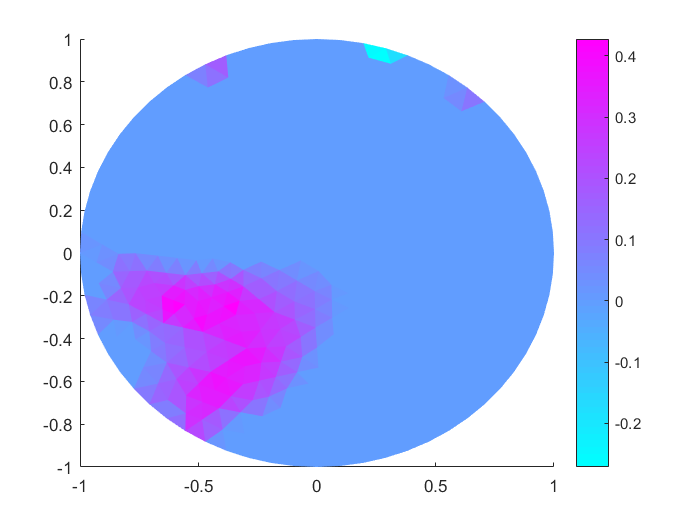} \\
 (d) recovered $\delta d(x)$ & (e) recovered $\mu_1$
&(f) recovered $\mu_2$
\end{tabular}
\end{center}
\caption{ Numerical results for Example \ref{exam6}: (a)-(c) the recovered results for case (i) (with 3 wavelengths); (d)-(f): results for case (ii) (with 30 wavelengths). }
\label{fig:exam6}
\end{figure}

\subsection{Imperfectly known boundary}

Now we illustrate the approach in the case of an imperfectly known boundary. The (unknown) true domain $\widetilde\Omega$
is an ellipse centered at the origin with semi-axes $a$ and $b$, $\mathcal{E}_{a,b} =\{(x_1,x_2): x_1^2/a^2+x_2^2/b^2<1\}$,
and the computational domain $\Omega$ is the unit disk. In this part, the regularization parameter $\gamma$ was determined
by a trial-and-error manner, and it was fixed at $\gamma=5\times 1^{-2}$ for the diffusion coefficient, $\gamma=1\times
10^{-4}$ for the absorption coefficient and $\gamma=1\times10^{-4}$ for the edge perturbation in all the numerical examples
with imperfectly known boundary. This algorithm is always initialized with a zero vector.

\begin{example}\label{exam3}
Consider the case of a known diffusion coefficient $D(\lambda,x)=s_0(\lambda)=0.2\lambda^{1.5}$, and two different shape
deformations: $(i)$ $\widetilde{\Omega}$ is an ellipse with $a=1.1$ and $b=0.9$ and $(ii)$ $\widetilde{\Omega}$ is an
ellipse with $a=1.2$ and $b=0.8$. Consider two chromophores inside $\widetilde{\Omega}$: the wavelength dependence of
the chromophore on the top is $s_1(\lambda)=0.5(\lambda-1)$, and that of the bottom is $s_2(\lambda)=0.5(\lambda-1)^2$.
The measurements are taken at $Q=3$ wavelengths with $\lambda_1=1$, $\lambda_2=1.5$ and $\lambda_3=2$, and
the noise level is fixed at $\eta=1\%$.
\end{example}

The numerical results for Example \ref{exam3} are shown in Figure \ref{fig:exam3}. This example illustrates the influence
of the deformation scale on the reconstruction. The numerical results show clearly the potential of the multi-wavelength
approach: Even using the wrong domain for the inversion step, we can still recover the concentrations of the chromophores
(or more precisely the deformed concentrations $\mu_k = \widetilde \mu_k \circ F^{-1}$). The numerical results also show
even we assume known diffusion coefficient in case (i), we should also use all the spectra $s_0$, $s_k$, and $s_*$ to
recover the right concentrations of the chromophores (Figures \ref{fig:exam3} (d) and (e)), or the results will be ruined
by the shape deformation (Figures \ref{fig:exam3} (b) and (c)).

\begin{figure}[hbt!]
\begin{center}
\begin{tabular}{ccc}
\includegraphics[width=4cm]{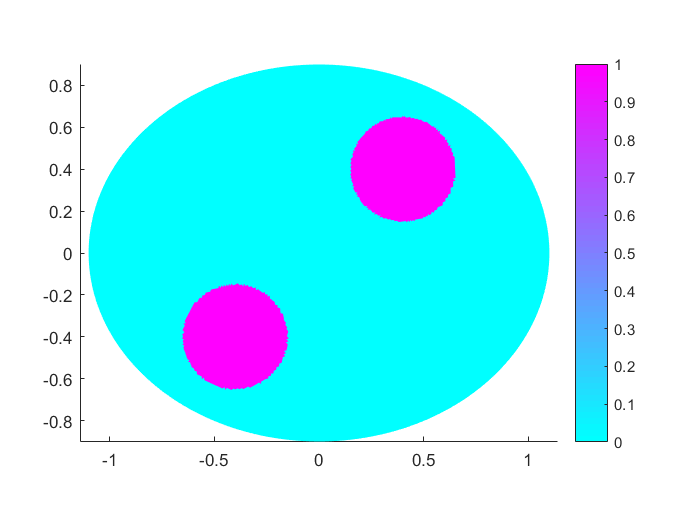} &
\includegraphics[width=4cm]{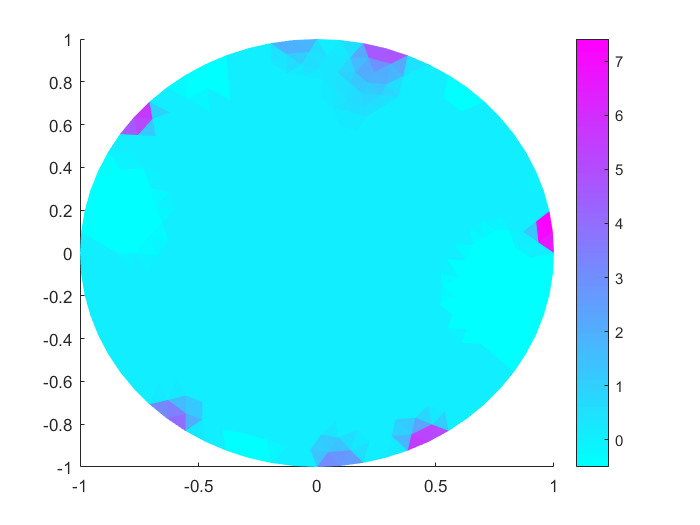} &
\includegraphics[width=4cm]{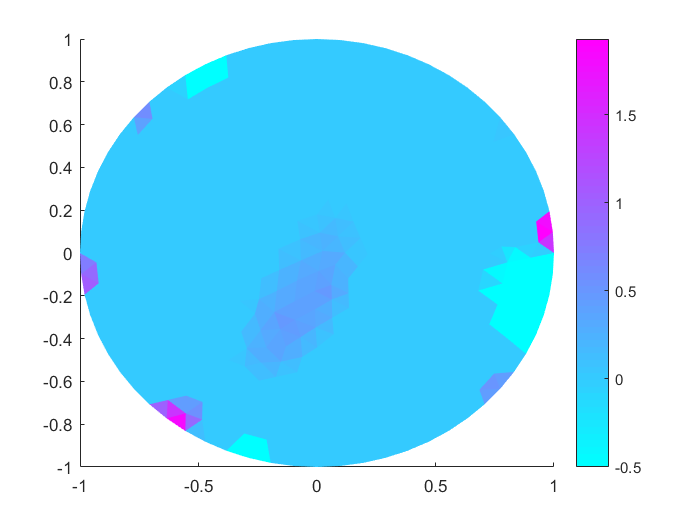}\\
(a) true $\mu_k$  & (b)  recovered $\mu_1$ &(c) recovered $\mu_2$ \\
&\includegraphics[width=4cm]{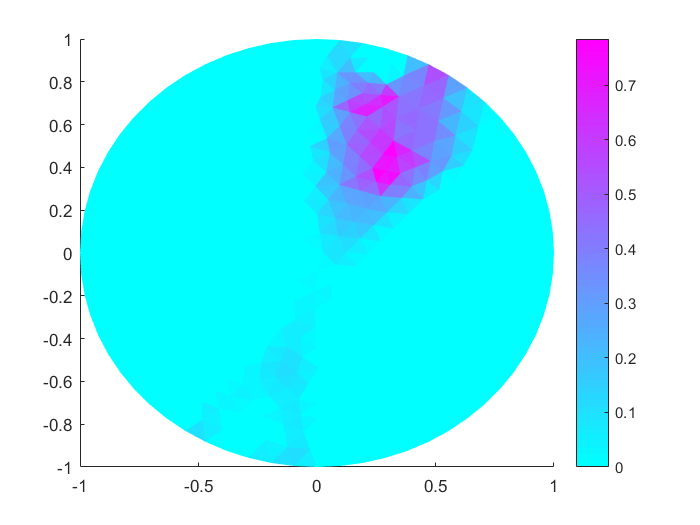} &
\includegraphics[width=4cm]{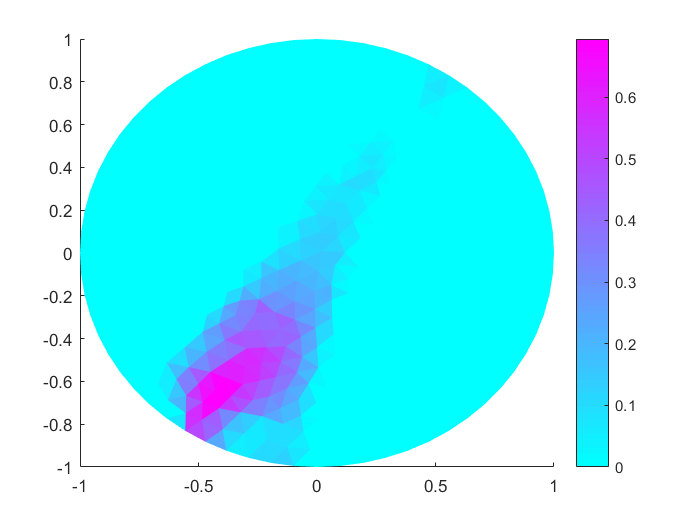}\\
& (d)  recovered $\mu_1$ &(e) recovered $\mu_2$ \\
\includegraphics[width=4cm]{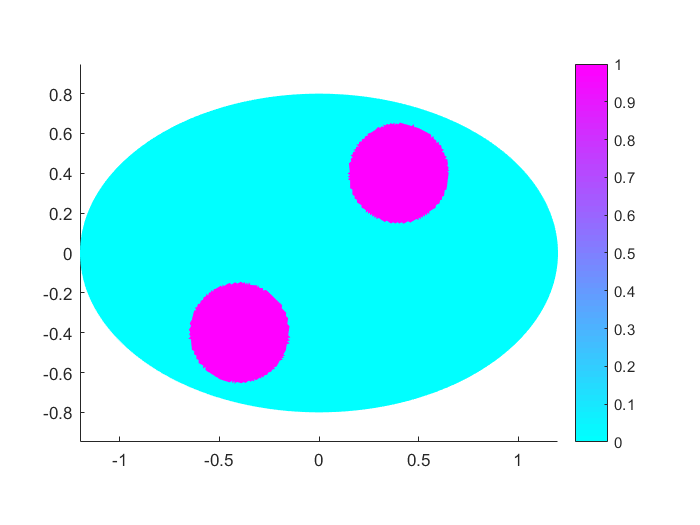} &
\includegraphics[width=4cm]{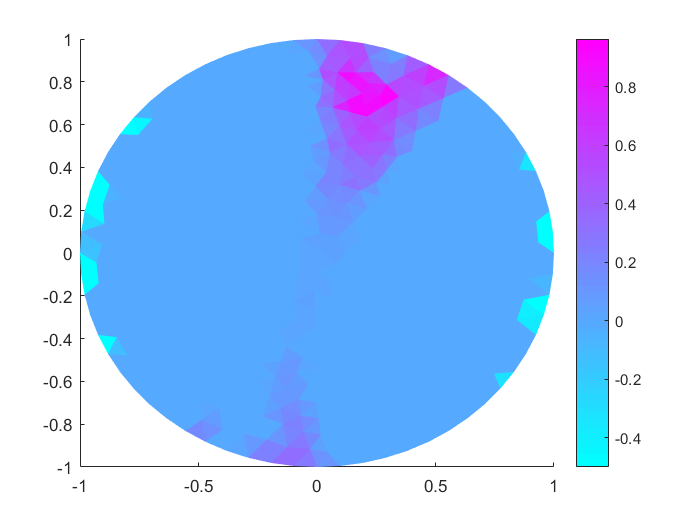} &
\includegraphics[width=4cm]{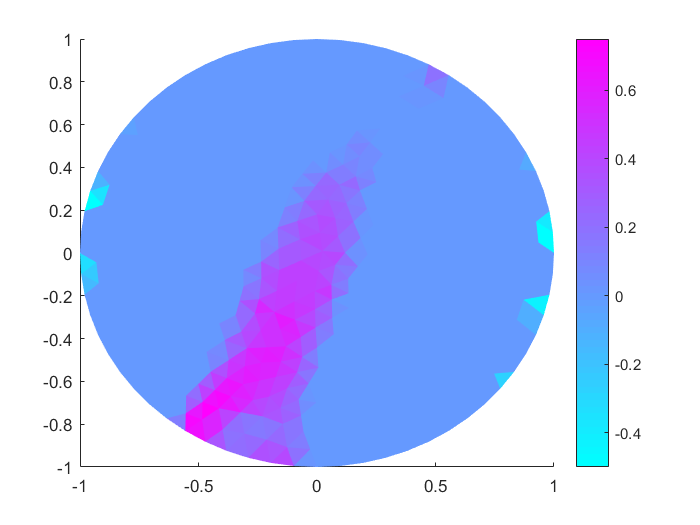}\\
(f) true $\mu_k$  & (g)  recovered $\mu_1$ &(h) recovered $\mu_2$
\end{tabular}
\end{center}
\caption{Numerical results for Example \ref{exam3}: (a),(f) exact $\mu_1$ and $\mu_2$ of two chromophores in $\widetilde\Omega$;
(b)-(c) recovered results for two chromophores in $\Omega$ for case (i) only using the spectra $s_k$ of the chromophores; (d)-(e)
recovered results for two chromophores in $\Omega$ for case (i) using the spectra $s_k$ of the chromophores, the spectrum $s_0$ of
the diffusion coefficient and the spectrum $s_*(\lambda)\equiv 1$ of the edge perturbation; (g)-(h) the recovered results for case
(ii) using all the spectra $s_0$, $s_k$, and $s_*$.} \label{fig:exam3}
\end{figure}

\begin{example}\label{exam4}
Consider the case of an unknown diffusion coefficient $D(\lambda,x)=s_0(\lambda)(1+0.1\delta d(x))=0.2\lambda^{1.5}
(1+0.25\delta d(x))$, and the unknown true domain $\widetilde\Omega$ is an ellipse with $a=1.1$ and $b=0.9$. Consider
two chromophores inside the domain $\widetilde\Omega$: the wavelength dependence of the chromophore on the top is
$s_1(\lambda)=0.5(\lambda-1)$, and that of the bottom is $s_2(\lambda)=0.5(\lambda-1)^2$. The measurements are taken
at $Q=3$ wavelengths with $\lambda_1=1$, $\lambda_2=1.5$ and $\lambda_3=2$, and the noise level is fixed at $\eta=1\%$.
\end{example}

The numerical results for Example \ref{exam4} are shown in Figure \ref{fig:exam4}.
It is observed that the two chromophores are recovered well in spite of the imperfectly
known boundary, while the diffusion coefficient is totally distorted by domain deformation,
and thus it cannot be accurately recovered. The empirical observations on Examples \ref{exam3} and \ref{exam4} concur
with the theoretical predictions in Section \ref{sec:unknownboundary}: Proposition \ref{prop:linear-perturb}
implies that the domain deformation will be added to the recovered results corresponding to the
spectral profile $s_0(\lambda)$ and the diffusion coefficient cannot be recovered due to the
domain deformation, but the deformed chromophore concentrations $\mu_k = \widetilde \mu_k
\circ F^{-1}$ can still be recovered.

\begin{figure}[hbt!]
\begin{center}
\begin{tabular}{ccc}
\includegraphics[width=4cm]{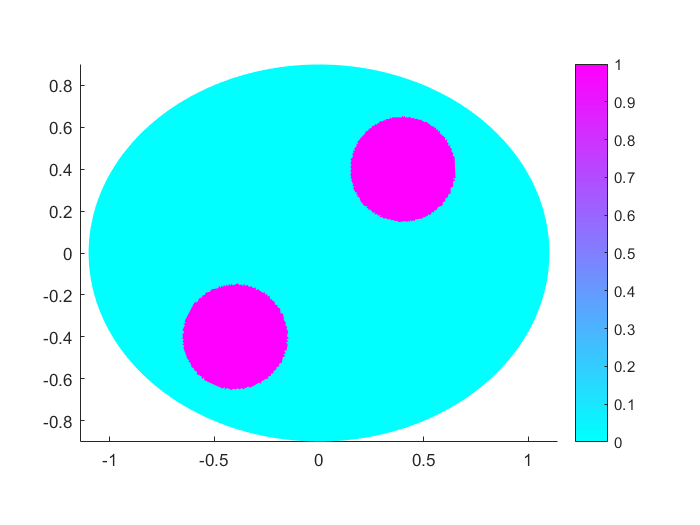} &
\includegraphics[width=4cm]{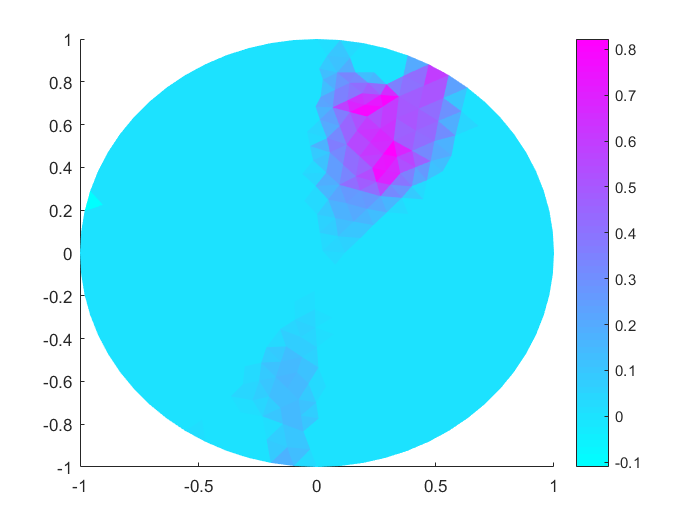} &
\includegraphics[width=4cm]{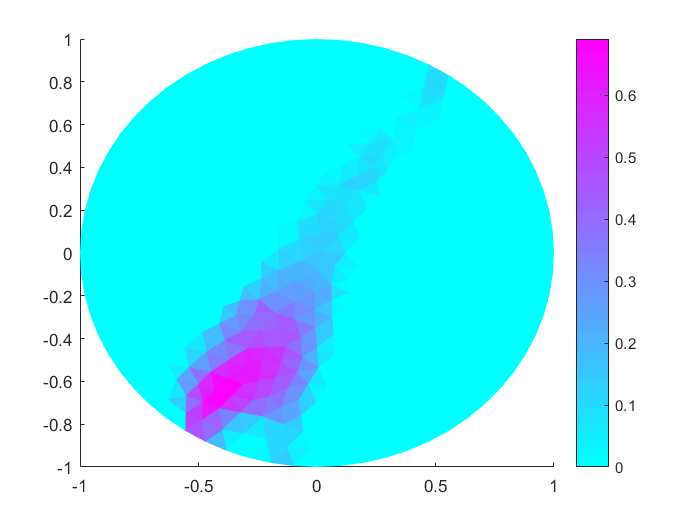}\\
(a) true $\mu_k$  & (b)  recovered $\mu_1$ & (c) recovered $\mu_2$ \\
\includegraphics[width=4cm]{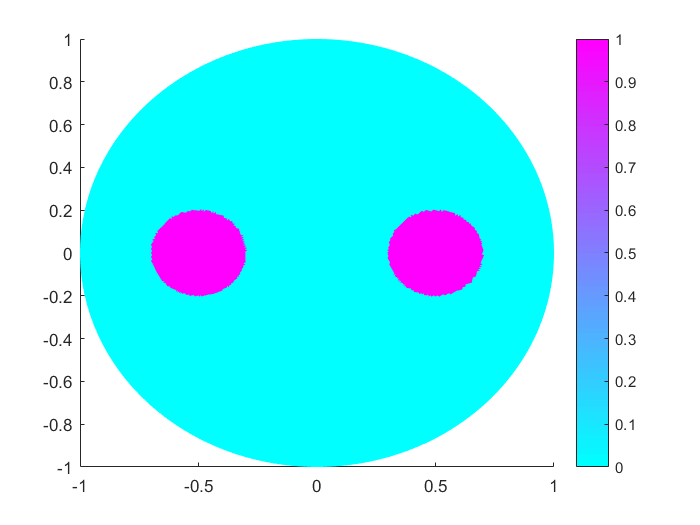} &
\includegraphics[width=4cm]{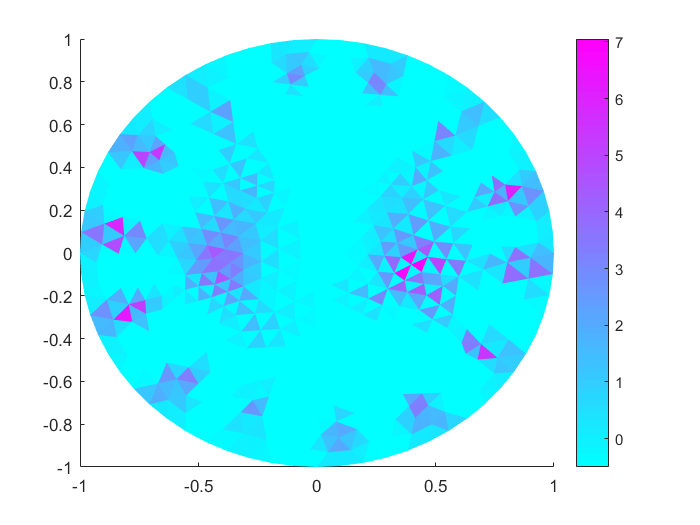} \\
(d) true $\delta d$ & (e) recovered $\delta d$
\end{tabular}
\end{center}
\caption{  Numerical results for Example \ref{exam4}: (a),(d): exact $\mu_1$ and $\mu_2$ of two chromophores in $\widetilde{\Omega}$;
(b)-(c): recovered results for two chromophores in $\Omega$; (e) the recovered result corresponding to $s_0(\lambda)$. }
\label{fig:exam4}
\end{figure}

\section{Conclusion} \label{sec:concl}

In this work, we have introduced a novel reconstruction technique for diffuse optical imaging
with multi-wavelength data. The approach is based on a linearized model and a group sparsity approach.
We have shown that within the linear regime, our reconstruction technique  allows recovering the concentration of individual
chromophore and the diffusion coefficients, provided that their spectral profiles are known and incoherent. Furthermore, we have demonstrated that the multi-wavelength data can significantly reduce
modelling errors associated with an imperfectly known boundary. In fact, it allows recovering
well (deformed) concentrations of the chromophores. These findings are
fully supported by extensive numerical experiments.

\bibliographystyle{abbrv}

\bibliography{multi-wave}

\begin{thebibliography}{10}

\bibitem{alberti}
G.~S. Alberti and H.~Ammari.
\newblock Disjoint sparsity for signal separation and applications to hybrid
  inverse problems in medical imaging.
\newblock {\em Appl. Comput. Harmon. Anal.}, 42(2):319--349, 2017.

\bibitem{AmmariWen2016}
G.~S. Alberti, H.~Ammari, B.~Jin, J.-K. Seo, and W.~Zhang.
\newblock The linearized inverse problem in multifrequency electrical impedance
  tomography.
\newblock {\em SIAM J. Imaging Sci.}, 9(4):1525--1551, 2016.

\bibitem{jugnon1}
H.~Ammari, E.~Bossy, V.~Jugnon, and H.~Kang.
\newblock Mathematical modeling in photoacoustic imaging of small absorbers.
\newblock {\em SIAM Rev.}, 52(4):677--695, 2010.

\bibitem{jugnon2}
H.~Ammari, E.~Bossy, V.~Jugnon, and H.~Kang.
\newblock Reconstruction of the optical absorption coefficient of a small
  absorber from the absorbed energy density.
\newblock {\em SIAM J. Appl. Math.}, 71(3):676--693, 2011.

\bibitem{laure}
H.~Ammari, J.~Garnier, L.~Giovangigli, W.~Jing, and J.-K. Seo.
\newblock Spectroscopic imaging of a dilute cell suspension.
\newblock {\em J. Math. Pures Appl.}, 105(5):603--661, 2016.

\bibitem{bookhybrid}
H.~Ammari, J.~Garnier, H.~Kang, L.~Nguyen, and L.~Seppecher.
\newblock {\em Multi-Wave Medical Imaging}, volume~2 of {\em Modelling and
  Simulation in Medical Imaging}.
\newblock World Scientific, London, 2017.

\bibitem{AmmariTriki:2017}
H.~Ammari and F.~Triki.
\newblock Identification of an inclusion in multifrequency electric impedance
  tomography.
\newblock {\em Comm. Partial Differential Equations}, 42(1):159--177, 2017.

\bibitem{Arridge99}
S.~R. Arridge.
\newblock Optical tomography in medical imaging.
\newblock {\em Inverse Problems}, 15(2):R41--R93, 1999.

\bibitem{BalRen:2012}
G.~Bal and K.~Ren.
\newblock On multi-spectral quantitative photoacoustic tomography in diffusive
  regime.
\newblock {\em Inverse Problems}, 28(2):025010, 13, 2012.

\bibitem{Bashkatov11}
A.~N. Bashkatov, E.~A. Genina, and V.~V. Tuchin.
\newblock Optical properties of skin, subcutaneous and muscle tissues: a
  review.
\newblock {\em J. Innovat. Opt. Health Sci.}, 04(01):9--38, 2011.

\bibitem{Bevilacqua:2000}
F.~Bevilacqua, A.~J. Berger, A.~E. Cerussi, D.~Jakubowski, and B.~J. Tromberg.
\newblock Broadband absorption spectroscopy in turbid media by combined
  frequency-domain and steady-state methods.
\newblock {\em Appl. Optics}, 39(34):6498--6507, 2000.

\bibitem{Boverman:2005}
G.~Boverman, E.~L. Miller, A.~Li, Q.~Zhang, T.~Chaves, D.~H. Brooks, and D.~A.
  Boas.
\newblock Quantitative spectroscopic diffuse optical tomography of the breast
  guided by imperfect a priori structural information.
\newblock {\em Phys. Med. Biol.}, 50(17):3941--3956, 2005.

\bibitem{Cerussi:2002}
A.~E. Cerussi, D.~B. Jakubowski, N.~Shah, F.~Bevilacqua, R.~M. Lanning, A.~J.
  Berger, D.~Hsiang, J.~A. Butler, R.~F. Holcombe, and B.~J. Tromberg.
\newblock Spectroscopy enhances the information content of optical mammography.
\newblock {\em J. Biomed. Optics}, 7(1):60--71, 2002.

\bibitem{CerussiTanamaiHsiang:2011}
A.~E. Cerussi, V.~W. Tanamai, D.~Hsiang, J.~Butler, R.~S. Mehta, and B.~J.
  Tromberg.
\newblock Diffuse optical spectroscopic imaging correlates with final
  pathological response in breast cancer neoadjuvant chemotherapy.
\newblock {\em Phil. Trans. Royal Sci.}, 369(1955):4512--4530, 2011.

\bibitem{Cox12}
B.~Cox, J.~Laufer, S.~R. Arridge, and P.~C. Beard.
\newblock Quantitative spectroscopic photoacoustic imaging: a review.
\newblock {\em J. Biomed. Opt.}, 17(6):061202, 2012.

\bibitem{Cox:09}
B.~T. Cox, S.~R. Arridge, and P.~C. Beard.
\newblock Estimating chromophore distributions from multiwavelength
  photoacoustic images.
\newblock {\em J. Opt. Soc. Am. A}, 26(2):443--455, 2009.

\bibitem{DaubechiesDefrise:2004}
I.~Daubechies, M.~Defrise, and C.~De~Mol.
\newblock An iterative thresholding algorithm for linear inverse problems with
  a sparsity constraint.
\newblock {\em Comm. Pure Appl. Math.}, 57(11):1413--1457, 2004.

\bibitem{Graaff:1992}
R.~Graaff, J.~G. Aarnoudse, J.~R. Zijp, P.~M.~A. Sloot, F.~F.~M. {de Mul},
  J.~Greve, and M.~H. Koelink.
\newblock Reduced light-scattering properties for mixtures of spherical
  particles: a simple approximation derived from {Mie} calculations.
\newblock {\em Appl. Optics}, 31(10):1370--1376, 1992.

\bibitem{Hettlich:1995}
F.~Hettlich.
\newblock Fr\'{e}chet derivatives in inverse obstacle scattering.
\newblock {\em Inverse Problems}, 11(2):371--382, 1995.

\bibitem{ItoJin:2015}
K.~Ito and B.~Jin.
\newblock {\em Inverse {P}roblems: {T}ikhonov {T}heory and {A}lgorithms}.
\newblock World Scientific, Hackensack, NJ, 2015.

\bibitem{Jacques:2013}
S.~L. Jacques.
\newblock Optical properties of biological tisses: a review.
\newblock 58(11):R37--R61, 2013.

\bibitem{KolehmainenLassas:2005}
V.~Kolehmainen, M.~Lassas, and P.~Ola.
\newblock The inverse conductivity problem with an imperfectly known boundary.
\newblock {\em SIAM J. Appl. Math.}, 66(2):365--383, 2005.

\bibitem{LauferCoxZhang:2010}
J.~Laufer, B.~Cox, E.~Zhang, and P.~Beard.
\newblock Quantitative determination of chromophore concentrations from {2D}
  photoacoustic images using a nonlinear model-based inversion scheme.
\newblock {\em Appl. Optics}, 49(8):1219--1233, 2010.

\bibitem{McBridePogue:1999}
T.~O. McBride, B.~W. Pogue, E.~D. Gerety, S.~B. Poplack, U.~L. \"{O}sterberg,
  and K.~D. Paulsen.
\newblock Spectroscopic diffuse optical tomography for the quantitative
  assessment of hemoglobin concentration and oxygen saturation in breast
  tissue.
\newblock {\em Appl. Optics}, 38(25):5480--5490, 1999.

\bibitem{PulkkinenCox:2014}
A.~Pulkkinen, B.~T. Cox, S.~R. Arridge, J.~P. Kaipio, and T.~Tarvainen.
\newblock A {B}ayesian approach to spectral quantitative photoacoustic
  tomography.
\newblock {\em Inverse Problems}, 30(6):065012, 18, 2014.

\bibitem{RazanskyDistel:2009}
D.~Razansky, M.~Distel, C.~Vinegoni, R.~Ma, N.~Perrimon, R.~W. K\"{o}ster, and
  V.~Ntziachristos.
\newblock Multispectral opto-acoustic tomography of deep-seated fluorescent
  proteins in vivo.
\newblock {\em Nature Photonics}, 3(7):412--417, 2009.

\bibitem{Ripoll05}
J.~Ripoll and V.~Ntziachristos.
\newblock Quantitative point source photoacoustic inversion formulas for
  scattering and absorbing media.
\newblock {\em Phys. Rev. E}, 71:031912, 9 pp., 2005.

\bibitem{SevickChange:1991}
E.~M. Sevick, B.~Change, J.~Leigh, S.~Nioka, and M.~Maris.
\newblock Quantitation of time-resolved and frequency-resolved optical spectra
  for the determination of tissue oxygenation.
\newblock {\em Anal. Biochem.}, 195(2):330--351, 1991.

\bibitem{SrinivasanPogue:2005}
S.~Srinivasan, B.~W. Pogue, S.~Jiang, H.~Dehghani, and K.~D. Paulsen.
\newblock Spectrally constrained chromophore and scattering near-infrared
  tomography provides quantitative and robust reconstruction.
\newblock {\em Appl. Optics}, 44(10):1858--1869, 2005.

\bibitem{Sylvester:1990}
J.~Sylvester.
\newblock An anisotropic inverse boundary value problem.
\newblock {\em Comm. Pure Appl. Math.}, 43(2):201--232, 1990.

\bibitem{YuanJiang:2009}
Z.~Yuan and H.~Jiang.
\newblock Simultaneous recovery of tissue physiological and acoustic properties
  and the criteria for wavelength selection in multispectral photoacoustic
  tomography.
\newblock {\em Opt. Lett.}, 34(11):1714--1716, 2009.

\end{thebibliography}

\end{document}